\documentclass[11pt]{amsart}
\usepackage{amsfonts,amscd,amsthm,amsgen,amsmath,amssymb, hyperref}
\usepackage[all]{xy}
\usepackage[vcentermath]{youngtab}
\usepackage[letterpaper,hmargin=1.55in,vmargin=1.2in]{geometry}
\usepackage{graphicx, xcolor}
\usepackage{bm}	
\usepackage{amsmath}
\usepackage{amsthm}	
\usepackage{amsfonts}	
\usepackage{amssymb}
\usepackage{mathrsfs}
\usepackage{latexsym}
\usepackage{enumerate}
\usepackage{dsfont}
\usepackage{amssymb}





\theoremstyle{plain}
\newtheorem{theorem}{Theorem}[section]

\newtheorem{lemma}[theorem]{Lemma}
\newtheorem{proposition}[theorem]{Proposition}

\theoremstyle{definition}
\newtheorem{definition}[theorem]{Definition}

\theoremstyle{remark}
\newtheorem{example}{Example}[section]

\newtheorem{remark}[theorem]{Remark}

\numberwithin{equation}{section}
\numberwithin{figure}{section}

\newcommand\tr{\text{tr} }

\newcommand\RR{\mathbb R}
\newcommand\CC{\mathbb C}
\newcommand\ZZ{\mathbb Z}

\newcommand\NN{\mathbb N}

\begin{document}

\title[Projective Families of Dirac operators on a Banach Lie Groupoid]{Projective Families of Dirac operators on a Banach Lie Groupoid}

\author{Pedram Hekmati}
\address[Pedram Hekmati]{School of Mathematical Sciences, University of Adelaide,
Adelaide, SA 5005, Australia}
\email{pedram.hekmati@adelaide.edu.au}
\author{Jouko Mickelsson}
\address[Jouko Mickelsson]{Department of Mathematics and Statistics, University of Helsinki,
Finland}
\email{jouko.mickelsson@helsinki.fi}
\begin{abstract} We introduce a Banach Lie group $G$ of unitary operators subject to a natural trace condition. We compute the homotopy groups of $G$, describe its cohomology and construct an $S^1$-central extension.  We show that the central extension determines a non-trivial gerbe on the action Lie groupoid $G\ltimes  \frak  k$, where $\frak k$ denotes the Hilbert space of self-adjoint Hilbert--Schmidt operators. With an eye towards constructing elements in twisted K-theory, we prove the existence of a cubic Dirac operator $\mathbb D$ in a suitable completion of the quantum Weil algebra $\mathcal{U}(\frak{g}) \otimes Cl(\frak{k})$, which is subsequently extended to a projective family of self-adjoint operators $\mathbb D_A$ on $G\ltimes  \frak k$. While the kernel of $\mathbb D_A$ is infinite-dimensional, we show that there is still a notion of finite reducibility at every point, which suggests a generalized definition of twisted K-theory for action Lie groupoids.

\end{abstract}

\thanks{{\em Acknowledgements.}
This work is supported by the Australian Research Council Discovery Projects DE120102657, DP130102578, DP110100072 and the Academy of Finland Grant 1138810.}

\maketitle

\section{Introduction}
The present paper is motivated by an attempt to extend the representation theoretic construction
of twisted K-theory cocycles on  compact Lie groups to more general settings. Twisted K-theory has its origin in \cite{DK}, where it was defined in terms of bundles of Azumaya algebras. The twist in question is a torsion class in the degree three integral cohomology of the space. The restriction to torsion elements was lifted in \cite{R} by passing to infinite dimensional algebra bundles. Recent years have witnessed a resurgence of interest in twisted K-theory, none the least due to its various applications in conformal field theory. A particularly convenient model for twisted K-theory  is as homotopy classes of sections of an associated bundle of Fredholm operators $P\times_{PU(H)} \mathcal F$. Here $P$ is a principal $PU(H)$-bundle on the space determined by the twist and $PU(H)$ is the projective unitary group of a complex Hilbert space, acting on the space of Fredholm operators $ \mathcal F$ by conjugation. \\

In the case of a compact connected simple Lie group $G$ the basic ingredients in the construction of twisted K-cocycles are the positive energy
representations of a central extension $\widehat{LG}$ of the smooth loop group $LG$ \cite{Mi04}. The level $k$ of the representation is related by transgression to the Dixmier-Douady class in $H^3(G, \mathbb{Z})=\mathbb Z$, which
constitutes the twist in K-theory. The projective representations of $LG$ appear in the construction in two ways.
First, one fixes an arbitrary highest weight representation of $\widehat{LG}$ of level $k$ 
in a complex Hilbert space $V_{\lambda}.$ Then
in order to construct a cubic Dirac operator $\mathbb D$ on the loop group $LG$ one also needs a representation of the
Clifford algebra based on the vector space $L\mathfrak g$ with a fixed  nondegenerate invariant
bilinear form. The representations of the Clifford algebra actually arise from the quasi-free
representations of the canonical anticommutation algebra defined by the polarization of
$L\mathfrak{g}$ to negative and positive Fourier components.  The spin representation of $\mathfrak{so}(L\mathfrak{g})$  yields a projective representation $S_\rho$ of $LG$ of  level
$h^\vee$, the dual Coxeter number of $G$, and with weight $\rho$ equal to the half sum of the positive roots. The full Hilbert space $H$ is the tensor product
$H= V_{\lambda} \otimes S_\rho$ carrying a loop group representation of level $k +h^\vee.$\\

The remaining ingredient in the construction is the coupling of the Dirac operator $\mathbb D$ to
$\mathfrak{g}$-valued  connection 1-forms $A$ on a trivial $G$-bundle on the unit circle $S^1,$ imitating
the case of Dirac operators on finite dimensional manifolds coupled to connections on a
complex vector bundle. The resulting family $\mathbb{D}_A$ of Fredholm operators transforms
equivariantly under the  projective representation of $LG$ (of level $k+h^\vee$) and determines a cocycle in twisted K-theory
of the quotient stack $\mathcal A//LG$ of conjugacy classes in $G$, where $\mathcal A$ denotes the affine space of connections on which $LG$ acts by gauge transformations. 
In fact all generators in the equivariant twisted K-theory $K_G(G, k+h^\vee)$ can be constructed
in this way \cite{FHT}. By restricting to the based loop group $\Omega G$, one obtains elements in the ordinary twisted K-theory of the group $G=
\mathcal A/\Omega G$.\\

The representation theory of $\widehat{LG}$ is closely related to the representation theory
of the restricted unitary group $U_{res}$ \cite{PS}. Restricted means that the off-diagonal
blocks of a unitary
transformations $g$ in a polarized Hilbert space $H=H_+ \oplus H_-$ are Hilbert--Schmidt operators. The
Lie algebra $\mathfrak{u}_{res}$ has a central extension $\hat{\mathfrak u}_{res}$ given
by the Lundberg cocycle $\omega_L(X,Y) = \frac14 \text{tr}\, \epsilon[\epsilon, X][\epsilon, Y]$ where
$\epsilon$ with $\epsilon^2=1$ is the grading operator in $H_+\oplus H_-$ \cite{Lu}. In the purely
algebraic approach this is called the Kac--Peterson cocycle \cite{KP}. The basic representation
of $\widehat{LG}$ is then obtained by a restriction to the subgroup $LG\subset
U_{res}$ in a fixed unitary representation of $G$. More general highest weight representations are obtained by a reduction from
tensor powers of the basic representation of $\hat U_{res}$ \cite{PS}. \\

Thus a natural question arises, namely whether it is possible to construct twisted K-theory cocycles on a `universal' space
$\mathcal E/U_{res}$ for some appropriate contractible space $\mathcal E$ carrying a free smooth action of $U_{res}$, in analogy with the construction for $G=\mathcal A/\Omega G$. The answer is no and the obstruction comes from the 
spin representation of a certain Clifford algebra. What corresponds to the dual Coxeter
number $h^\vee$ in the case of a compact Lie group $G$ now diverges. This can be understood topologically as follows.
The space $\mathcal E/U_{res}$ is a classifying space for principal $U_{res}$-bundles and has the homotopy type
of $U(\infty),$ the inductive limit of the unitary groups $U(n).$ Thus we are constructing 
twisted K-theory on the group $U(\infty),$ with the dual Coxeter number $h^\vee = \lim_{n\to\infty} n =\infty.$ In the
finite dimensional case of $U(n)$ (or $SU(n)$) the twisted K-theory groups $K(U(n), k)$ vanish
for $0<k \leq h^\vee.$ Thus it is natural that one gets nothing in the case of $U(\infty).$ \\

Meanwhile, the above obstruction provides a hint of what could be done. The issue with the spin representation of $\mathfrak{ u}_{res}$ is that not every element is implementable as an automorphism of the spin module. The necessary and sufficient requirement is 
that the elements in $\mathfrak u_{res}$ satisfy the Hilbert--Schmidt condition defined by a polarization in the \it adjoint representation \rm of the Lie algebra \cite{Lu}. We find that the resulting condition is for $[D,X]$ to be Hilbert--Schmidt, where $D$ is an unbounded
self-adjoint operator with spectrum equal to $\mathbb Z$. For instance, it can be interpreted as the generator of rigid rotations 
on the unit circle. Since $D$ is unbounded, this imposes a non-trivial restriction on $\mathfrak u_{res}$. The Lundberg cocycle in the spin representation induces then the cocycle
$$\omega(X,Y) = \text{tr}_c \, X[D,Y]$$
on the smaller Lie algebra of implementable automorphisms; the conditional trace $\text{tr}_c$ means that it is computed
 in a basis where $D$ is diagonal. We denote our restricted Lie algebra simply by
$\mathfrak g$ and the corresponding infinite unitary group by $G.$ \\

Adhering to the above restriction, we can now repeat the cubic Dirac operator construction almost verbatim in the new setting, aside from two subtle points. Firstly, since $\frak g$ is a Banach Lie algebra it does not support a Clifford algebra. This problem is resolved by instead considering  the adjoint action of $\frak g$ on the Hilbert space of self-adjoint Hilbert--Schmidt operators $\frak k$ and interpreting the Dirac operator $\mathbb D$ as an element in the quantum Weil algebra $U(\frak g_\CC) \otimes Cl(\frak k_\CC)$. Secondly, for $\mathbb D$ to be well-defined it is necessary to perform a normal ordering regularization. For loop groups, this amounts to suppressing an infinite constant by shifting the bottom of the energy spectrum to zero. In our case the ramification is more drastic as it involves subtraction by an unbounded operator.\\

Treating elements in $\frak k$ as generalized gauge connections, we can couple the Dirac operator to $A\in \frak k$ and prove the equivariance of $\mathbb D_A$ with respect to the action of $G$ in a projective highest weight representation. However, this family is not quite Fredholm. The kernel of $\mathbb{D}_A$ can be infinite-dimensional, but the infinite dimensionality is controlled in the following way. At every point $A\in \mathfrak k$, the kernel carries a finite
number of irreducible representations of the quantum Weil algebra $U(\mathfrak g_A) \otimes Cl(\mathfrak g'_A), $
where $\mathfrak g_A \subset \mathfrak g$ is the Lie algebra of the isotropy group at that point
and $\mathfrak g'_A = \mathfrak g_A\cap \frak k$.  More generally, we have the following definition:

\begin{definition}\label{def} \it Let $\mathcal G=G\ltimes \frak k$ be an action Lie groupoid defined by a smooth action of a Fr{\'e}chet Lie group $G$ on a Fr{\'e}chet manifold $\mathfrak k.$ Let $c\in H^2(\mathcal G, S^1)$ be a cocycle on $\mathcal G$ defining a groupoid $S^1$-central extension $\hat{\mathcal G}$ and fix a unitary representation of $\hat{\mathcal G}$ on a complex separable Hilbert space $H$.  Then the generalized odd twisted  K-theory group $K^1(\mathcal G, c)$
is defined as homotopy classes of  maps $f\colon \mathfrak k\to End_*(H)$ into the space of self-adjoint operators on $H$, such that
\begin{enumerate}
\item[$(i)$]
$f(A^g) =  g^{-1} f(A) g$ with $A\in \frak k$ and $g\in\mathcal G$ acting projectively on $H$, 
\item[$(ii)$]
the kernel of $f(A)$ decomposes into a finite sum of
irreducible modules of  $U(\mathfrak h) \otimes Cl(\mathfrak h')$, where $\mathfrak h$ is
the Lie algebra of the stabilizer group at $A$ and $\mathfrak h'$ is the adjoint module of $\mathfrak h$
equipped with a real $\mathfrak h$-invariant inner product.
\end{enumerate}
\end{definition}
 
For unbounded operators, we consider the strong operator topology on the space of self-adjoint operators. The generalized even twisted K-theory group $K^0(\mathcal G, c)$ is defined in the usual way by introducing a $\mathbb Z_2$ grading operator $\Gamma$ and requiring that the operators
$f(A)$ anticommute with $\Gamma.$ In the case of a free $G$-action on $\frak k$, the above definition gives the standard
 definition of twisted K-theory on the manifold $\frak k/G.$ For $G= LH,$ the loop group of
 a compact Lie group $H$, and $\frak k$ the space of smooth $\mathfrak h$-valued 1-forms on the circle,
 one recovers the equivariant twisted K-theory of $H.$\\
  
An outline of the paper is as follows. In  Section 2 we introduce the infinite-dimensional
 unitary group $G$ and its central extension. The homotopy groups of $G$ are determined and 
 the low dimensional cohomology groups are discussed. In  Section 3 we recall some basic relations
 between characteristic forms on $\frak k/G$ and Lie algebroid cocycles on $G\ltimes \frak k$, when
 a Lie group $G$ acts freely on a manifold $\mathfrak k.$ The discussion serves as a motivation for the
 non-free case, when $\frak k/G$ is not a smooth manifold and one is confined to work with Lie groupoid cocycles instead of (de Rham) forms on the quotient.
 In Section 4 we refine the geometric construction of Lie group cocycles by Wagemann and Wockel
 \cite{W} to the case of action Lie groupoids. Besides serving as a method for integrating our Lie algebroid cocycle, this is important also in gauge theory where the breaking of chiral symmetries manifests itself as abelian extensions of the group of gauge transformations \cite{Mi85}.
 
 Finally, in Section 5 we present a construction of a cubic Dirac operator based on the
 representation theory of a central extension of $G.$ Although one can easily give a formal
 definition of the cubic Dirac operator imitating the finite-dimensional case, particular care is needed in infinite dimensions as the formal operator is an infinite sum in the quantum Weil algebra. The 
 divergencies can be avoided by working in a suitable completion and performing normal ordering twice; first in the definition of the spin 
 group generators which are quadratic expressions in the Clifford algebra, and second in the 
 construction of the cubic term which involves products of the spin operators with the generators of the Clifford algebra. We can then proceed to define a  family of cubic Dirac operators and we  show that our construction conforms to Definition \ref{def} for the case of a Lie groupoid central extension determined by the Lie algebra
 cocycle $\omega$.

\section{The Banach Lie group $G$ and its central extension}

Let $H$ be a complex separable infinite dimensional Hilbert space and $D$ a self-adjoint operator such that in an orthonormal basis $\{e_n\}_{n\in\mathbb Z}$ the action
is defined by $De_n = n e_n.$ Thus $D$ can be interpreted as the generator for rotations of the unit circle $S^1$ in the Hilbert space $L^2(S^1)$ of square
integrable complex functions.

Let $G$ be the group of unitary operators $g$ on $H$ such that $[D,g]$ is a Hilbert--Schmidt operator, 
$$G = \{g\in U(H) | \ ||[D,g]||^2_2=\tr |[D,g]|^2<\infty\}.$$
In particular, in the orthonormal basis $\{e_n\}$ the off-diagonal
part of $g$ is Hilbert--Schmidt.  The Lie algebra $\mathfrak g$ of $G$ consists of bounded skew-adjoint operators $X$ with the property that $[D,X]$ is Hilbert--Schmidt.
Then for any $X,Y \in \mathfrak g$  the conditional trace $\omega(X,Y)= \tr_c \, X [D,Y]$ is absolutely convergent; the conditional trace means that it is evaluated in the
basis $\{e_n\}$. We shall omit the subscript $c$ in the sequel. By a direct computation one observes that $\omega$ is a Lie algebra 2-cocycle, namely it is skew-symmetric and satisfies
$$ \omega(X, [Y,Z]) + \omega(Y,[Z,X]) + \omega(Z, [X,Y]) =0$$
for all $X,Y,Z \in \mathfrak g.$ In finite dimensions $\omega$ would be the coboundary of the linear form $- \tr\, D X$, but in the Hilbert space $H$ this trace does not converge. 

We define the topology on $G$ by the Hilbert--Schmidt norm on $[D,g]$ and the supremum norm on the diagonal matrix elements of $g$. 
Consequently the Hilbert--Schmidt norm of the off-diagonal matrix elements of $g$ is continuous in this topology. The Lie algebra $\frak g$ acquires a Banach structure from the norm 
$$||X|| = ||X_d||_\infty + ||[D,X]||_2$$
where $X_d$ is the diagonal part of $X$. Thus $G$ becomes a real Banach Lie group  in the standard way, by using the exponential map to construct a local chart near the identity element of $G$ and using left translations by elements in the group to obtain an atlas. Smoothness of the local group structure near the unit and of the transition functions is ensured by the Campbell--Baker--Hausdorff formula. We note further that $G$ embeds continuously as a subgroup of the restricted unitary group $U_{res}(H_+\oplus H_-)$, but it is not a normal subgroup. Here the polarization of $H$  is defined by the sign of $D.$ We shall not
make use of this embedding since the cocycle defining the standard central extension of $U_{res}$
when restricted to $G$ is not equivalent to $\omega$, see Remark \ref{lacoh} below. 

\begin{lemma}\label{cocycle} The 2-cocycle $\omega$ defines a non-trivial central extension of the Lie algebra $\mathfrak{g}.$ \end{lemma}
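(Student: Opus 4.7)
The strategy is to argue by contradiction and show that no continuous linear functional $\lambda\colon\mathfrak g\to\mathbb C$ can satisfy $\omega(X,Y)=\lambda([X,Y])$, so that the central extension is topologically non-split. The guiding heuristic, flagged just before the statement, is that on finite-rank subalgebras the only candidate primitive is $X\mapsto -\text{tr}(DX)$, and this expression fails to extend to a continuous functional on $\mathfrak g$ because $D$ is unbounded. One should first record that $\omega$ is itself continuous, so that continuous coboundary is the right notion of triviality: splitting $X=X_d+X_o$ along the diagonal/off-diagonal decomposition, the diagonal part contributes zero to $\text{tr}_c\,X[D,Y]$, and the bound $\lVert X_o\rVert_2\le\lVert[D,X]\rVert_2$ combined with Hilbert--Schmidt Cauchy--Schwarz gives $|\omega(X,Y)|\le\lVert X\rVert_{\mathfrak g}\lVert Y\rVert_{\mathfrak g}$.

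Assuming such a primitive $\lambda$, I would first test $\omega$ on the family of $\mathfrak{su}(2)$-triples
\[
 h_{mn}=i(E_{mm}-E_{nn}),\qquad x_{mn}=E_{mn}-E_{nm},\qquad y_{mn}=i(E_{mn}+E_{nm}),
\]
indexed by distinct integers $m,n$, where $E_{ab}$ denotes the rank-one operator $e_c\mapsto\delta_{bc}e_a$. Each of these lies in $\mathfrak g$ since $[D,E_{ab}]=(a-b)E_{ab}$ has finite rank, and a routine matrix-unit computation gives both the relation $[x_{mn},y_{mn}]=2h_{mn}$ and the value $\omega(x_{mn},y_{mn})=-2i(m-n)$. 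Writing $a_n:=\lambda(iE_{nn})$, the identity $\omega=d\lambda$ on this triple forces $a_m-a_n=-i(m-n)$, so that $a_n=c-in$ for a single constant $c\in\mathbb C$ independent of $n$, precisely matching the finite-dimensional formula $\lambda(iE_{nn})\sim-\text{tr}(D\cdot iE_{nn})=-in$.

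The contradiction then comes from a bounded sequence in $\mathfrak g$ on which $\lambda$ must diverge. Put
\[
 Y_N:=i\sum_{n=1}^{N}\bigl(E_{-n,-n}-E_{n,n}\bigr)\in\mathfrak g.
\]
Since $Y_N$ is diagonal with entries in $\{0,\pm i\}$ and $[D,Y_N]=0$, one has $\lVert Y_N\rVert_{\mathfrak g}=1$ uniformly in $N$, while linearity forces
\[
 \lambda(Y_N)=\sum_{n=1}^{N}(a_{-n}-a_n)=2i\sum_{n=1}^{N}n=iN(N+1),
\]
which grows without bound, contradicting continuity of $\lambda$. I do not expect any step to be a serious obstacle; the only subtlety is choosing a test sequence such that the telescoping constraints on $a_n$ force $\lambda$ to grow quadratically while the norm stays bounded, and noting that continuity of $\lambda$ (equivalently, of any coboundary primitive) is essential, since algebraic extension by a Hamel basis could in principle evade the argument.
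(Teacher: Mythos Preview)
Your argument is correct and follows essentially the same route as the paper: both pin down the would-be primitive on diagonal matrix units as $-\tr DX$ up to a multiple of the trace, and then observe that this cannot extend to bounded diagonal operators in $\mathfrak g$. Your version is more explicit---you spell out the $\mathfrak{su}(2)$-triple computation and produce a concrete norm-bounded sequence $Y_N$ on which the primitive blows up---and you correctly flag that the argument proves non-triviality in \emph{continuous} Lie algebra cohomology, a point the paper's proof leaves implicit when it says the finite-rank primitive ``cannot be extended.''
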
 
\begin{proof} Restricted to finite rank operators in $\frak g$, $\omega$ is the coboundary of $$\theta(X) = -\tr\, DX + \lambda \tr\, X$$ for any complex number $\lambda$ and
this is the most general form of any  $\theta$ with $\delta\theta = \omega$. The 1-cochain $\theta$ cannot be extended to $\mathfrak g$ since for bounded diagonal matrices the
trace does not converge for any $\lambda \in \CC$. 
\end{proof} 
\begin{remark} \label{lacoh}
The second Lie algebra cohomology of $\frak g$ is infinite dimensional. In particular, $\omega_m(X,Y)=\tr \, X [D^{1/(2m+1)},Y]$ defines a non-trivial 2-cocycle on $\frak g$  for every $m\in\NN$ and since the operator  $D^{1/(2m+1)}-D^{1/(2n+1)}$ is unbounded for $n\neq m$, a similar argument as in the proof of Lemma \ref{cocycle} implies that these cocycles are distinct in $H^2(\frak g,\CC)$. Similarly one shows that  $\omega$ cannot be cohomologous to the restriction of the  Lundberg  cocycle $\omega_{\textit{\tiny{L}}}(X,Y)=\tr \, X \left[\epsilon,Y\right]$ to the Lie algebra $\frak g$. Here $\epsilon= \frac{D}{|D|}$ is the sign operator and we recall that $\tr$ denotes the conditional trace. The zero mode of $D$ could belong to either the positive or the negative sector, as these yield cohomologous cocycles.
\end{remark}
A central extension $\widehat G$ of $G$ can be constructed  analogously to the method used in \cite{PS} for the central extension of the restricted unitary group.
Consider the set $S$ of pairs $(g,q)$ where $g\in G$ and $q$ is any invertible diagonal matrix such that the diagonal part of $gq^{-1}$ differs from the unit
matrix by a matrix $h$ such that $Dh$ is trace-class. Define an equivalence relation $(g,q) \sim (g',q')$ if $g=g'$ and $\text{det}((q^{-1} q')^D) =1$  and
set $\widehat G = S/{\sim}$. Here $q^D$ for a diagonal matrix $q$ means the operator with the matrix entries $(q^D)_{ii} = (q_{ii})^i$. The group law in $\widehat G$ is defined by $(g_1, q_1) (g_2, q_2) = (g_1g_2, q_1q_2).$  This is well-defined since the function $g\mapsto \text{det}(g^D)$
is well-defined and multiplicative for invertible diagonal matrices $g$ with $D(g-1)$ trace-class.

Near the unit element in $G$ we have a smooth local section $\psi\colon g\mapsto (g, g_d)$ where $g_d$ is the diagonal part of $g.$ This identifies locally the extension $\widehat G$ as a product
$G\times \mathbb C^*$ by $(g,q)\mapsto (g, \text{det}((qg_d)^D)).$ Using the first terms in the
Baker--Campbell--Hausdorff formula, $e^X e^Y = e^{X+Y +\frac12 [X,Y] +\dots} $ and  writing 
$$(e^X,\lambda) (e^Y,\mu) = (e^X e^Y,\lambda\mu e^{\frac12 \omega(X,Y)+\dots}),$$
where the dots signify higher order terms in $X,Y$, one checks that the Lie algebra cocycle arising from the central extension $\widehat G$ is indeed the 2-cocycle $\omega$.
In the above construction we could have taken the determinant $\text{det}((q^{-1} q')^D)$ to any integer power $k$ and then the corresponding Lie algebra cocycle
would be of the form
$$\omega(X,Y)= k\, \text{tr} X[D,Y],$$
a central extension of {\it level} $k.$

Recall that any Lie algebra 2-cocycle defines a closed left-invariant 2-form on a corresponding Lie group $G$ by identifying the Lie algebra with left-invariant vector fields on $G.$ 

\begin{lemma}\label{trivial}  The left-invariant 2-form $\omega$ is trivial in the real cohomology of $G$.
\end{lemma}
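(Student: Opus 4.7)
Since $\omega$ is derived from a Lie-algebra $2$-cocycle, its left-invariant lift to $G$ is automatically closed, and the task is to exhibit a global smooth $1$-form $\alpha$ with $d\alpha=\omega$. By Lemma~\ref{cocycle} the formal left-invariant primitive coming from $\theta(X)=-\text{tr}_c DX$ fails to extend continuously to $\mathfrak g$, so $\alpha$ must be genuinely non-invariant.

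The strategy I would pursue exploits the smooth map $\phi\colon G\to\mathfrak k$, $\phi(g)=g^{-1}Dg-D$, which is well-defined since $[D,g]$ is Hilbert--Schmidt, satisfies the affine cocycle identity $\phi(gh)=\mathrm{Ad}_{h^{-1}}\phi(g)+\phi(h)$, and has derivative $d\phi_g(gY)=[g^{-1}Dg,Y]$. The target $\mathfrak k$ is a Hilbert space, hence contractible, so any closed $2$-form on $\mathfrak k$ admits a global primitive. On $\mathfrak k$ consider the continuous skew-symmetric bilinear form
$$b(U,V)=\tfrac12\sum_{i\neq j}\frac{U_{ij}V_{ji}}{i-j},$$
which converges absolutely by Cauchy--Schwarz against the Hilbert--Schmidt norm. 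Inverting the relation $[D,Y]_{ij}=(i-j)Y_{ij}$ on off-diagonal entries immediately yields $2b([D,Y],[D,Z])=\text{tr}_c Y[D,Z]$, so $\phi^*(2b)$ agrees with $\omega$ at the identity. The natural candidate primitive is then $\alpha_g(gY):=b\bigl(\phi(g),[g^{-1}Dg,Y]\bigr)$, whose exterior derivative can be computed by a Cartan-type formula using the derivative of $\phi$ together with the Jacobi identity.

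The principal obstacle is that $\phi$ is only an affine cocycle, not an equivariant map, so left-translation acts on $\mathfrak k$ by $x\mapsto\mathrm{Ad}_{h^{-1}}x+\phi(h)$; consequently $\phi^*(2b)$ is not left-invariant and its discrepancy with $\omega$ away from the identity has to be analyzed and absorbed into an explicitly exact correction term in $\alpha$. If this matching proves cumbersome, a cleaner backup is purely topological: invoke the homotopy computation of $G$ carried out elsewhere in Section~2 to conclude $H^2(G;\mathbb R)=0$ --- by analogy with $U(\infty)$, whose real cohomology is an exterior algebra on odd-degree generators --- which forces every closed $2$-form, and in particular $\omega$, to be exact.
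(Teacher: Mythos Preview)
Your backup route is simply false. The group $G$ is not homotopy equivalent to $U(\infty)$: Proposition~\ref{homotopy} shows that $\pi_1(G)$ is a free abelian group of cardinality $2^{\aleph_0}$, and Proposition~\ref{cohomology} uses precisely this to produce uncountably many linearly independent closed $1$-forms $\theta_\lambda$ whose wedge products give nonzero classes in $H^2(G,\mathbb R)$. So $H^2(G,\mathbb R)\neq 0$, and you cannot conclude exactness of $\omega$ from a vanishing theorem for the whole cohomology group; the lemma is really a statement about this particular class.

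Your primary approach is only a strategy, and you acknowledge as much. You never verify that $d\alpha=\omega$, and the ``obstacle'' you identify is real: since $\phi$ is an affine cocycle rather than an equivariant map, the pullback $\phi^*b$ will differ from the left-invariant $\omega$ by terms involving $[\phi(g),\,\cdot\,]$, and you have not shown how to absorb these into an exact correction. There is also a minor sign/reality issue to sort out (for self-adjoint $U,V$ your $b(U,V)$ is purely imaginary), but the main point is that no computation is actually carried through.

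The paper's argument is quite different and avoids constructing a global primitive altogether. It shows that $\int_\Sigma\omega=0$ for every smooth closed surface $\Sigma\subset G$. Compactness of $\Sigma$ furnishes a single integer $N$ such that the diagonal entries $g_{ii}$ are invertible for all $|i|>N$ and all $g\in\Sigma$. One then ``renormalizes'' the divergent formal primitive $-\tr\,DX$ by subtracting, for $|k|>N$, the closed $1$-forms $k\,g_{kk}^{-1}\,dg_{kk}$; this cancels the diverging diagonal part term by term and leaves a convergent $1$-form $\theta$ on $\Sigma$ with $d\theta=\omega|_\Sigma$. The primitive depends on $\Sigma$ through $N$, but that is enough to kill the pairing with every $2$-cycle and hence the real cohomology class.
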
 
\begin{proof}  It suffices to show that the pairing of $\omega$ with any smooth cycle in $G$ vanishes. Let $\Sigma \subset G$ be a smooth closed surface in $G.$  Then for any $g\in \Sigma$ the off-diagonal part of the infinite unitary matrix
$$f_N(g)_{ij}= \begin{cases} g_{ij} & \text{if }  |i|> N \text{ or } |j| > N \\ \delta_{ij} & \text{otherwise}\end{cases}$$ 
is smaller than $1/2$ in the Hilbert--Schmidt norm when $N=N(g)$ is large enough. By the continuity of the Hilbert--Schmidt
norm of the off-diagonal matrix elements of $g$, there is an open neighbourhood
$U_N(g)$ of $g$ such that the  Hilbert--Schmidt norm of $f_N(g)$ away from the diagonal is less than $1/2$ in $U_N(g).$ The family $\{U_N(g)\}_{g\in\Sigma}$ forms an open cover of the compact 
set $\Sigma,$ thus by choosing a finite subcover indexed by $N(g_1), \dots, N(g_p)$, we get a largest integer $N =\max\{N(g_i)\}$ such that the
Hilbert--Schmidt norm of the off-diagonal part of $f_N(g)$ is smaller than $1/2$ for all $g\in \Sigma.$ By the unitarity of $g$ this implies that $|g_{ii}-1| < 1/4$
for all $g\in \Sigma$ and $|i| > N.$ In particular, each $g_{ii}$ is invertible for $|i| > N.$ 

Define a 1-form $\theta$ on $\Sigma$ by
$$\theta(X) = \sum_{|k| > N}  k( X_{kk}  - g_{kk}^{-1} dg_{kk}(X) )  + \sum_{|k| \leq N} kX_{kk}$$
where the matrix $X\in \mathfrak g$  is again interpreted as a left invariant vector field on $G.$  Since $dg_{kk}(X) = g_{kk} X_{kk}
+ \sum_{j\neq k} g_{kj} X_{jk}$ we have $kg_{kk}^{-1} dg_{kk}(X) = kX_{kk} + \sum_{j\neq k} k g_{kk}^{-1} g_{kj} X_{jk}. $ Inserting the right hand side of this expression into the sum  $|k| > N$ above, we see that the second term is convergent by the Hilbert--Schmidt property of $[D,X]$  in the off-diagonal part of $g.$  The first term cancels against the diverging part of $\tr\, DX$ and so $\theta$ is well-defined. 

Since each $g_{kk}^{-1} dg_{kk}$ is closed,
this `renormalization' of the diverging 1-form $\tr\, DX$ does not affect the relation $\omega= d\theta$ on $\Sigma$  and therefore the integral of $\omega$ over the closed surface
$\Sigma$ is indeed zero. 
\end{proof} 

\begin{remark} \label{extension} Recall that if a group $G$ is not simply connected it could admit several inequivalent central extensions with the same Lie algebra extension.  For instance, the
simply connected covering group $\tilde G$ is an extension of $G$ but of course their Lie algebras are isomorphic when
the fundamental group $\pi_1(G)$ is discrete.  In Proposition \ref{homotopy} we show that in our case the group $G$  has a free abelian fundamental group on a set with the cardinality of $\RR$. Thus, the central extension $ \widehat G $ constructed above may be topologically non-trivial despite the vanishing of $\omega$ in $H^2(G,\RR)$. In addition, since $H_1(G,\mathbb Z) = \pi_1(G)$ it follows by the universal coefficient theorem that $H^2(G,\mathbb Z)$ is torsion-free.
\end{remark} 
 
\begin{proposition}\label{cohomology}   The cohomology groups $H^n(G,\RR)$ for $n\geq 1$ have rank greater than or equal to $2^{\aleph_0}$. 
\end{proposition}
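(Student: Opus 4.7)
The plan is to leverage Proposition~\ref{homotopy} (proved later in the paper) together with the $H$-space structure of $G$. Proposition~\ref{homotopy} asserts that $\pi_1(G)$ is a free abelian group on an index set $S$ with $|S|=2^{\aleph_0}$. Since $G$ is a topological group, $\pi_1(G) = H_1(G,\ZZ)$, and the universal coefficient theorem gives
\[
H^1(G,\RR) \;\cong\; \mathrm{Hom}_{\ZZ}\bigl(H_1(G,\ZZ),\RR\bigr) \;\cong\; \RR^S,
\]
a real vector space of dimension at least $2^{\aleph_0}$. This handles the case $n=1$.

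For $n\geq 2$ the strategy is to construct $2^{\aleph_0}$ linearly independent cup products of classes in $H^1$. Fix based loops $\gamma_\beta:S^1\to G$, $\beta\in S$, whose homotopy classes form a $\ZZ$-basis of $\pi_1(G)$, and dual classes $\alpha_\beta\in H^1(G,\RR)$ satisfying $\langle\alpha_\beta,[\gamma_{\beta'}]\rangle=\delta_{\beta\beta'}$. Fix a total ordering on $S$, and for each $n$-element subset $I=\{\beta_1<\cdots<\beta_n\}\subset S$ set
\[
\alpha_I \;:=\; \alpha_{\beta_1}\cup\cdots\cup\alpha_{\beta_n}\;\in\; H^n(G,\RR).
\]
There are $\binom{|S|}{n}=2^{\aleph_0}$ such subsets, so it suffices to show that the family $\{\alpha_I\}$ is linearly independent.

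Independence is checked by pulling back to small tori. For each finite $J=\{\beta'_1<\cdots<\beta'_n\}\subset S$, let $\mu_J:T^n\to G$ be the map $(z_1,\dots,z_n)\mapsto \gamma_{\beta'_1}(z_1)\cdots\gamma_{\beta'_n}(z_n)$, which is continuous by continuity of multiplication in $G$. As $G$ is an $H$-space, every class in $H^1(G,\RR)$ is primitive under the multiplication, so $\mu_J^*\alpha_\beta=\sum_i\langle\alpha_\beta,[\gamma_{\beta'_i}]\rangle\,d\theta_i$ in $H^1(T^n,\RR)$. Multiplicativity of pullback on cup products then yields
\[
\mu_J^*\alpha_I \;=\; \det\bigl(\langle\alpha_\beta,[\gamma_{\beta'}]\rangle\bigr)_{\beta\in I,\,\beta'\in J}\; d\theta_1\wedge\cdots\wedge d\theta_n \;\in\; H^n(T^n,\RR)\cong\RR,
\]
which equals $1$ for $I=J$ and $0$ otherwise, since any row indexed by $\beta\in I\setminus J$ vanishes identically. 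A finite relation $\sum c_I\alpha_I=0$ tested against $\mu_J$ therefore forces $c_J=0$ for every $J$.

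The principal technical input is the primitivity/determinant computation in the third step, which relies on the Hopf algebra structure of $H^*(G,\RR)$ induced by the group multiplication. This is standard once one knows that $G$ is a genuine topological group in the Banach Lie group topology of Section~2, and that pullback respects cup products. A minor side check is that the loops $\gamma_\beta$ and their finite products $\mu_J$ are continuous in this topology, which is automatic from the construction of $\pi_1(G)$ and from continuity of group multiplication in $G$.
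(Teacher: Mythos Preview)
Your argument is correct and takes a cleaner, more topological route than the paper. Both proofs ultimately detect nontriviality of $n$-fold products of degree-one classes by pairing against $n$-tori in $G$, but the source of the degree-one classes differs. The paper builds explicit left-invariant de Rham $1$-forms $\theta_\lambda=\lambda(g^{-1}dg)$ from generalized-limit functionals $\lambda\in\ell_\infty^*$ vanishing on $\ell_1$ (using that commutators in $\mathfrak g$ have $\ell_1$ diagonals, together with Hahn--Banach), and then argues that wedge products $\theta_{\lambda_1}\wedge\cdots\wedge\theta_{\lambda_n}$ are non-exact by evaluating on tori. You instead take the free-abelian structure of $\pi_1(G)$ from Proposition~\ref{homotopy} as a black box, produce dual classes $\alpha_\beta$ via the universal-coefficient isomorphism $H^1(G,\RR)\cong\mathrm{Hom}(H_1(G,\ZZ),\RR)$, and detect independence of cup products by the determinant computation on $T^n$. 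Your approach is more self-contained (no Hahn--Banach, no analysis of traces on $\mathfrak g$) and makes transparent that the result is a formal consequence of the size of $\pi_1(G)$ together with the group structure; the paper's approach has the advantage of producing concrete de Rham representatives, in keeping with the differential-geometric flavour of the surrounding sections.

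Two minor remarks. First, your computation of $\mu_J^*\alpha_\beta$ does not actually require the Hopf-algebra primitivity statement: since the $\gamma_{\beta'}$ are based at $e$, restricting $\mu_J$ to the $i$-th circle factor (other coordinates at the basepoint) is literally the loop $\gamma_{\beta'_i}$, whence $\langle\mu_J^*\alpha_\beta,[\text{$i$-th circle}]\rangle=\langle\alpha_\beta,[\gamma_{\beta'_i}]\rangle$ directly. This sidesteps any K\"unneth issues for the infinite-dimensional cohomology of $G\times G$. Second, since you invoke Proposition~\ref{homotopy}, which appears later, you should note that its proof does not use Proposition~\ref{cohomology}, so there is no circularity.
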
 
\begin{proof}  
By H\"older's inequality all commutators in the Lie algebra $\mathfrak g$ have the property of being trace-class and in particular the elements in the diagonal of $[X,Y]$ form a sequence in the Banach space $\ell_1$ of absolutely summable sequences. On the other hand, there is an infinite number of continuous linear functionals $\lambda$ on the Banach space $\ell_{\infty}$ of bounded sequences that vanish on
the vector subspace $\ell_1$.\footnote{Recall that the second dual of $\ell_1$ is the Banach algebra of regular Borel measures on the Stone--\v{C}ech compactification $\beta \NN$ of  the natural numbers. Elements in $\ell_\infty^*$ that vanish on $\ell_1\subset \ell_{\infty}$ can be constructed as follows. Consider the closed subspace  $c\subset \ell_\infty$ of bounded convergent sequences and its closed subspace $c_0\subset c$ of sequences converging to zero. By the Hahn--Banach theorem, the limit functional $(a_k) \mapsto \lim_{k\to\infty}a_k$ on $c$ can be extended to a generalized limit functional $\lambda\in \ell_\infty^*$. This is a non-trivial continuous linear functional of operator norm 1 that annihilates all of $c_0$  and hence vanishes on $\ell_1 \subset c_0$.} Any such linear functional defines a trace on the Lie algebra $\mathfrak g$ by applying $\lambda$ to the bounded sequence of diagonal
matrix elements of $X\in \mathfrak g.$  

Using this trace one can define a closed 1-form on $G$ by
$$\theta_{\lambda}= \lambda(g^{-1} dg),$$ 
where $\lambda$ is computed on the diagonal entries in the Lie algebra. It is not difficult to show that
$\theta_\lambda$ is not an exact form for $\lambda \neq 0$ by pairing with a cycle in $H_1(G,\ZZ)\cong \ZZ^{2^{\aleph_0}}$. Moreover, by $\theta_\lambda-\theta_{\lambda'}=\theta_{\lambda-\lambda'}$  it follows that these 1-forms are cohomologous if and only if $\lambda = \lambda'$. Since any trace functional $\lambda \in \ell^*_\infty$ can be scaled by a real number, we conclude that $\dim H^1(G,\RR) \geq 2^{\aleph_0}$. 	

Next any wedge product $\theta_{\lambda} \wedge \theta_{\lambda'}$ defines a closed 2-form on $G$, which is not exact since pairing with a 2-torus in $G$ yields a non-zero number in general, namely $(2\pi)^2$ times the form $\lambda\wedge \lambda' \in \bigwedge^2 \ell_\infty^* $ evaluated on some  $X,Y\in\frak g$ with integral diagonal entries. More specifically, a pair of linearly independent trace functionals can be constructed by defining $\lambda(X)= \lambda'(Y)=1$ and $\lambda(Y)= \lambda'(X)=0$ on linearly independent elements $X,Y\in\frak g$ whose diagonal entries form integral sequences in $\ell_\infty$ modulo $c_0$. The integrality condition ensures that the diagonals exponentiate to a torus in $G.$ We require
that $\lambda,\lambda'$ vanish on $c_0$ in order that the 2-form $\theta_{\lambda} \wedge \theta_{\lambda'}$ is closed. Now $\lambda, \lambda'$ defined on $\text{span}_\CC\{X,Y\} \oplus c_0$
extend to bounded linear functionals on the whole space $\ell_{\infty}$  by the Hahn--Banach theorem.

As before, the difference $\theta_{\lambda} \wedge \theta_{\lambda'} -\theta_{\lambda} \wedge \theta_{\lambda''} = \theta_{\lambda} \wedge \theta_{\lambda'-\lambda''}$  cannot be exact unless  $\lambda'-\lambda''$ is proportional to $\lambda$. Thus by considering real multiples of $\theta_{\lambda} \wedge \theta_{\lambda'}$,  it follows  that $\dim H^2(G,\RR) \geq 2^{\aleph_0}$. Since there exist infinitely many linearly independent trace functionals in $\ell^*_\infty$, this process can be continued inductively and we conclude that the dimension of $H^n(G,\RR)$ is larger than or equal to the cardinality of the continuum for all $n\geq 1$. 
\end{proof}  
\begin{remark}  By $H^1(G,\ZZ) = \text{Hom}_\ZZ(H_1(G,\ZZ),\ZZ) = \text{Map}(\RR,\ZZ)$ it follows that the rank of the first cohomology group of $G$ is $2^{2^{\aleph_0}} $, so in this case the lower bound in Proposition \ref{cohomology} is strict.  
\end{remark}
 
Let $L_{p}$ denote the Schatten two-sided  $*$-ideal of bounded operators $X\in  B(H)$ satisfying $\text{tr}  |X|^{p} < \infty$.
Recall that the stable unitary group $U(\infty)$ is a dense subgroup of the Banach Lie groups $U_p(H)= U(H)\cap (1+L_{p})$ for $p\geq 1$ and  these groups all have the same homotopy type. In particular, the homotopy groups are trivial in even degrees and freely generated by a single element in odd degrees. For instance, an explicit generator for the fundamental group of $U_p(H)$ is given by choosing a unit vector $v\in H$ and setting 
$\gamma\colon S^1\to U_p(H), \ z \mapsto 1 + (z-1)P_v$, where $P_v$ is the projection onto the span of $v$.

\begin{proposition} \label{homotopy} The homotopy groups $\pi_k(G)$ are equal to the homotopy groups of the stable unitary group $U(\infty)$ for $k\neq 1$, namely
$\pi_{2m}(G) =0$ and $\pi_{2m+3}(G) = \Bbb Z$ for $m\in \NN$. The fundamental group is the infinite free abelian group $$\pi_1(G)= \pi_1(G_d/G_d\cap G_0)\oplus\ZZ,$$
where $G_d \subset G$ is the subgroup of all unitary diagonal matrices and $G_0\subset G$ is the subgroup 
of elements $g$ for which the diagonal sum $\sum_{i\in\ZZ} |g_{ii} -1|$ converges.
\end{proposition}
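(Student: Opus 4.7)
The strategy is to realize $G$ as the total space of a fibration
$$G_0 \longrightarrow G \longrightarrow Q,\qquad Q := G/G_0,$$
whose fiber has the homotopy type of $U(\infty)$ and whose base is aspherical. First I would check that $G_0$ is indeed a subgroup: using the identity
$(gh)_{ii}-1 = (g_{ii}-1)h_{ii} + (h_{ii}-1) + \sum_{j\neq i} g_{ij}h_{ji}$, the two diagonal terms are manifestly $\ell^1$-summable in $i$, and Cauchy--Schwarz bounds the row-column sum by $\|g^{\mathrm{od}}\|_2\,\|h^{\mathrm{od}}\|_2<\infty$. The projection $G\to Q$ is then made locally trivial near the identity via the exponential map: split $X=X_d+X^{\mathrm{od}}\in\mathfrak{g}$ and decompose $X_d\in i\ell_\infty$ into its $\ell^1$-component and a Banach complement modelled on $\ell_\infty/\ell_1$. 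The former, together with $X^{\mathrm{od}}$, remains in $\mathfrak{g}_0$, while the latter provides a local section through the abelian subgroup $G_d$. This identifies the base as $Q\cong G_d/(G_d\cap G_0)$.

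Next I would compute $\pi_*(G_0)$ by comparison with $U(\infty)$. The finite-rank unitary groups $U(2N+1)$, acting on $\mathrm{span}\{e_{-N},\dots,e_N\}$ and extended by the identity on the orthogonal complement, embed continuously in $G_0$ (the inclusion involves only finite-rank perturbations of the identity, for which all relevant norms are finite); their union is dense in $G_0$, and a standard stabilization argument, mirroring the classical identification of $U(\infty)$ with the homotopy type of $U_p(H)$ for the Schatten ideals, yields a weak homotopy equivalence $G_0\simeq U(\infty)$. In particular $\pi_{2m}(G_0)=0$ and $\pi_{2m+1}(G_0)=\mathbb{Z}$. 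For the base: the Lie algebra of $Q$ is $i(\ell_\infty/\ell_1)$, a Banach space and hence contractible, so $Q$ is aspherical with $\pi_k(Q)=0$ for $k\geq 2$ and $\pi_1(Q)=\pi_1(G_d/(G_d\cap G_0))$ by definition.

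The long exact sequence of the fibration then yields $\pi_k(G)\cong\pi_k(G_0)=\pi_k(U(\infty))$ for all $k\geq 2$, establishing the even/odd alternation $\pi_{2m}(G)=0$, $\pi_{2m+3}(G)=\mathbb{Z}$. In degree one it reads
$$0 \to \pi_1(G_0) \to \pi_1(G) \to \pi_1(Q) \to 0,$$
and splits because the generator of $\pi_1(G_0)=\mathbb{Z}$, the standard loop $z\mapsto 1+(z-1)P_{e_0}$, lifts canonically to $G$, while every loop in $Q$ lifts to the abelian subgroup $G_d\subset G$; this yields the asserted direct-sum decomposition. The principal technical obstacle is verifying that $G\to Q$ is a Serre fibration, since $G_0$ is defined by an $\ell^1$-condition on the diagonal which is not preserved by the sup-norm topology of $G$. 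This is handled by an explicit normal-form decomposition via the exponential map that decouples the $\ell^1$-diagonal data from the $\ell_\infty/\ell_1$-data and the off-diagonal Hilbert--Schmidt data.
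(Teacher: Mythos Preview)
Your overall strategy matches the paper's: analyze the fibration $G_0 \to G \to Q = G/G_0$, identify $G_0 \simeq U(\infty)$ (the paper invokes a theorem of Palais for this step rather than a direct stabilization argument), show $Q$ is aspherical, and run the long exact sequence. Two points in your execution need repair.

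First, your claim that $Q$ is aspherical because ``the Lie algebra of $Q$ is $i(\ell_\infty/\ell_1)$, a Banach space and hence contractible'' is a non-sequitur as written: every Lie algebra is a contractible vector space, yet most Lie groups are not aspherical. What makes the argument go through is that $Q$ is a connected \emph{abelian} Banach Lie group, so the exponential map is a surjective group homomorphism with discrete kernel, hence a universal covering; contractibility of the source then gives $\pi_k(Q)=0$ for $k\geq 2$. The paper takes a different route here: having established the homotopy equivalence $Q \simeq G_d/(G_d\cap G_0)$, it runs the long exact sequence of the fibration $G_d\cap G_0 \to G_d \to Q$, observing that both $G_d$ and $G_d\cap G_0$ are (infinite) tori with vanishing higher homotopy.

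Second, your splitting argument is muddled. Saying that the generator of $\pi_1(G_0)=\ZZ$ ``lifts canonically to $G$'' is vacuous, since $G_0\subset G$ already; that is the wrong direction for splitting $0\to\ZZ\to\pi_1(G)\to\pi_1(Q)\to 0$. And ``every loop in $Q$ lifts to $G_d$'' gives only a set-theoretic lift of homotopy classes, not a homomorphism $\pi_1(Q)\to\pi_1(G)$. The paper's argument is that $\pi_1(G)$ is abelian (since $G$ is an $H$-space) and $\pi_1(Q)$ is a \emph{free} abelian group---concretely, the group of bounded integer sequences modulo finitely supported ones---hence projective as a $\ZZ$-module, so the short exact sequence of abelian groups splits. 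You should make the freeness of $\pi_1(Q)$ explicit, as this is what actually drives the splitting.
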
 
\begin{proof}  By Theorem $B$ in \cite{P} the group $G_0$ has the same homotopy type as the group $U(\infty)$ of infinite unitary matrices which differ from the unit
matrix by a matrix of finite size. 
Now $G_0$ is a normal subgroup in $G$ and in fact $G/G_0$ is abelian. The subgroup $G_d$ of diagonal matrices defines a Banach Lie group $G_d/(G_0\cap G_d)$ which is homotopy equivalent 
 to $G/G_0.$  This follows
using the argument in the proof of Lemma \ref{trivial}. Namely, any $g\in G$ has the property that the diagonal matrix elements are nonzero outside a finite block with matrix
indices $|i|, |j| < N$ and furthermore $|g_{ii}| \to 1$ as $|i| \to \infty.$ Multiplying $g$ by the diagonal matrix $d$ with $d_{ii} = g_{ii}^{-1}$ for $|i| >N$ and $g_{ii} =1$ otherwise, one obtains $dg\in G_0.$ Although $d$ is not unitary in general we can use the homotopy equivalence of the unit circle with $\mathbb C^{\times}$ to conclude that
the class $gG_0$ is represented as $d'G_0$ with $d'_{ii} = d_{ii}/|d_{ii}|$ and the inclusion $G_d/(G_0\cap G_d) \to G/G_0$ is an isomorphism. 

Using the fact that any continuous loop in $S^1$ is homotopic to a homomorphism $S^1 \to S^1$ we conclude that any based loop in $G_d$ is homotopic 
to a map $t\mapsto (e^{it a_k})$ with $a_k\in \mathbb Z$ and $0\leq t\leq 2\pi.$ The infinite sequence $(a_k)_{k\in\mathbb Z}$ must be bounded in order that the
loop is continuous in the operator norm topology. The set of the bounded sequences has the cardinality of the set of real numbers and thus
$\pi_1(G_d)$  is a free abelian group of cardinality $2^{\aleph_0}$.
The continuous loops in the subgroup $G_d\cap G_0$ correspond to sequences $(a_k)$ such that $a_k=0$ when $|k| >> 0.$ This set has the cardinality of
the set $\mathbb Q$ of rational numbers and thus  $\pi_1(G_d/ G_d\cap G_0)$ is the additive abelian group of bounded sequences of integers modulo
sequences with finite nonzero entries and has the cardinality of  $\mathbb R.$ 

Next, using the homotopy equivalence of $G/G_0$ and $G_d/(G_d\cap G_0)$ and the long exact sequence
$$ \cdots \to \pi_k(G_d\cap G_0) \to \pi_k(G_d) \to \pi_k(G/G_0) \to \pi_{k-1}(G_d\cap G_0) \to \dots$$
we conclude that $\pi_k(G/G_0) =0$ for $k> 1$.  Consequently,  the homotopy exact
sequence 
$$ \cdots \to \pi_k(G_0) \to \pi_k(G) \to \pi_k(G/G_0) \to \pi_{k-1}(G_0) \to \cdots$$
yields  $\pi_k(G) = \pi_k(G_0)$ for $k>1$ and a short exact sequence 
$$0\to \ZZ\to \pi_1(G) \to \pi_1(G_d/ G_d\cap G_0) \to 0.$$
This group extension is split since $G$ is an $H$-space, so its fundamental group is abelian and consequently the extension must be central. Furthermore, $\pi_1(G_d/ G_d\cap G_0)$ is a free abelian group so by lifting each generator, the isomorphism $\pi_1(G_d/ G_d\cap G_0) \to \pi_1(G)/\ZZ$ extends to a homomorphism $\pi_1(G_d/ G_d\cap G_0) \to \pi_1(G)$ and the obstructing 2-cocycle must vanish. Thus we have $\pi_1(G) = \pi_1(G_d/ G_d\cap G_0)\oplus \ZZ$.
 
\end{proof}  
\begin{remark}  The homotopy groups of $G$ agree with the homotopy groups of the product
$G_0 \times K(L,1)$, where $L=\pi_1(G_d/ G_d\cap G_0)$ and  $K(\pi,n)$ denotes the Eilenberg-MacLane space whose only non-trivial homotopy group is equal to the group $\pi$ in dimension $n$.
However, in order to use the Whitehead theorem to conclude that $G$ is homotopy equivalent to
$G_0 \times K(L,1)$, we would need a continuous map between the two spaces that induces the isomorphism between the homotopy
groups.  We observe further that the universal covering group of $G$ has the same homotopy groups as $\widetilde U_p(H) =\{(g,z)\in U_p(H)\times \CC  | \  \det_p(g)=e^z\}$ for any $p\geq 1$, where $\det_p$ is the Carleman--Fredholm determinant of order $p$,
$$\text{det}_p(g) = \det\left(ge^{\sum_{j=1}^{p-1}(-1)^{j}\frac{(g-1)^j}{j}}\right).$$
\end{remark}

\section{K-theory and Lie algebroid cocycles}
In this section we discuss the relation between Lie algebroid cocycles and the Chern character of classes in odd K-theory. We then show that our Lie algebra cocycle $\omega$ defines a non-trivial cocycle on a natural action Lie groupoid associated to the group $G$. 
As a motivation consider first the following situation. Let $\frak k$ be a contractible manifold and $G$ a Lie group acting
freely on $\frak k$.  The quotient $\frak k/G$ is then a smooth manifold. If $\Omega$ is a closed integral 3-form on the base $\frak k/G$, 
the pullback $\pi^*\Omega$ with respect to the canonical projection $\pi\colon \frak k \to \frak k/G$ is an exact form on $\frak k,$  $\pi^*\Omega = d\theta.$ The form $\theta$ is
closed along the $G$-orbits in $\mathfrak k,$ therefore it defines a (possibly exact) 2-cocycle on the Lie algebra $\mathfrak g$ with coefficients in the algebra of smooth functions on $\frak k$, or equivalently, a cocycle in the degree two cohomology  of the action Lie algebroid $\frak g\ltimes \frak k$ with values in the sheaf of smooth $\RR$-valued functions.  In particular, for a connected simply connected (infinite-dimensional) Lie group  one has $H^2(G,\mathbb Z)=\pi_2(G)$ and by the exact homotopy sequence for fibrations  $\pi_1( \frak k/G)= \pi_2( \frak k/G)=0$, which implies $H^3( \frak k/G,\mathbb Z)= \pi_3( \frak k/G)= \pi_2(G) = H^2(G, \mathbb Z)$ and the transgression discussed above is a 
realization of this isomorphism. This happens for instance when $G$ is the based loop group of a connected simply connected Lie group $H$ and $\frak k$ is the  space of smooth $\mathfrak h$-valued 1-forms on the unit circle on which $G$ acts by gauge transformations.

In the opposite direction, any such Lie algebroid 2-cocycle $\theta$ defines a class in $H^3( \frak k/G, \mathbb Z).$ Although the 3-form is not canonically defined, the class
is determined as follows. Let $s_3$ be a closed singular 3-chain on $\frak k/G$ and choose a lift $\hat s_3$ to $\frak k.$ Then $\hat s_3$ is not closed in general, but
the boundary $\partial\hat s_3 = b_2$ projects to zero as a singular 2-simplex in $\frak k/G,$ namely $b_2$ is a vertical 2-cycle and can be paired with the vertical 2-cocycle 
$\theta$. We can define a 3-form $\Omega$ on $\frak k/G$ by 
$$ \langle \Omega, s_3\rangle = \langle\theta, b_2\rangle.$$
In our case, the $G$-action is not free and the quotient $\frak k /G$ is a differentiable stack, but we can still think of the gerbe
as an $S^1$-central extension of the action Lie groupoid $G\ltimes \mathfrak k$ defined by integrating the cocycle $\theta\in H^2(\frak g\ltimes \frak k,\RR)$ to an $S^1$-valued groupoid cocycle \cite{XL-G}. Note that while we have defined $\theta$ as a vertical de Rham cocycle, one could more generally  consider $\theta$ as a vertical singular cocycle.

Degree two cocycles for infinite-dimensional algebras, including current algebras and algebras of vector fields, typically arise from quantization of families of Dirac type operators.
The 2-cocycles are then determined by the K-theory class of a family of Fredholm operators and can be computed from index theory \cite{CM}.
We want to point out that higher order cocycles are generated in the same way.

Let $\mathcal F_{*}$ be the space of  self-adjoint
Fredholm operators acting in a complex Hilbert space $H,$  with both positive and essential spectrum. The bounded Fredholm operators provide a model for the classifying space of odd complex K-theory but in many applications one has to
deal with unbounded operators. There are then different options for the topology on $\mathcal F_{*}.$ The simplest is the Riesz
topology defined 
by the map $D\mapsto D/( |D|^2 +1)^{1/2}$ to bounded Fredholm operators and defining the topology by pullback from the operator norm topology. Another popular choice is the \it gap topology, \rm see \cite{BLP} for details in the context of K-theory.  
Alternatively, one can use the group $U_1(H)$ of unitary operators $g$ in $H$ such that $g-1$ is a trace-class operator  as a classifying space \cite{AS}. 

Typically a representative for a class in $K^1(\mathfrak k/G)$ is given by a continuous $G$-equivariant map $f\colon \mathfrak k \to U_1(H)$, that is, $f(A^g) = g^{-1} f(A) g$ for $g\in G$ with a fixed unitary representation of  $G$ on $H.$  However, we have
\begin{lemma} Any $G$-equivariant map $f\colon\mathfrak k \to U_1(H)$ is homotopic to a $G$-invariant map. \end{lemma}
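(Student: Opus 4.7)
The plan is to exploit the fact that $\mathfrak{k}$ is a Hilbert space (hence contractible) in a $G$-equivariant way, using the origin $0\in\mathfrak{k}$ as a fixed point of the adjoint $G$-action. The key observation is that the straight-line contraction $H(A,t)=(1-t)A$ is $G$-equivariant, since the $G$-action on $\mathfrak{k}\subset B(H)$ is given by linear conjugation $A\mapsto g^{-1}Ag$ and therefore $((1-t)A)^{g}=(1-t)A^{g}$.

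First I would define $F\colon \mathfrak{k}\times[0,1]\to U_1(H)$ by $F(A,t):=f((1-t)A)$. At $t=0$ this recovers $f$, while at $t=1$ it yields the constant map $A\mapsto f(0)$. Because both the contraction and $f$ are equivariant, $F(\cdot,t)$ is $G$-equivariant for every $t\in[0,1]$, so in fact the homotopy takes place entirely within the class of equivariant maps.

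Second, I would apply equivariance at the fixed point $A=0$: since $0^{g}=g^{-1}\cdot 0\cdot g=0$, we obtain
\[
f(0)=f(0^{g})=g^{-1}f(0)g\qquad\text{for every }g\in G,
\]
i.e.\ $f(0)$ commutes with the entire unitary representation of $G$ on $H$. Consequently the constant map $A\mapsto f(0)$ is not merely equivariant but literally $G$-invariant under the conjugation action on $U_1(H)$, so $F$ furnishes the desired homotopy from $f$ to a $G$-invariant map.

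There is essentially no technical obstacle — everything reduces to the contractibility of $\mathfrak{k}$ together with the presence of a global $G$-fixed point. The conceptual point worth flagging in the write-up is that the conclusion is precisely a \emph{negative} one: any equivariant map into $U_1(H)$ is canonically trivializable, so without additional structure equivariant maps $\mathfrak{k}\to U_1(H)$ carry no $K$-theoretic information. This is exactly the motivation for the refined kernel-decomposition condition in Definition \ref{def}, which imposes pointwise data that the above contraction cannot absorb.
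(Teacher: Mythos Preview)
Your argument hinges on the claim that the $G$-action on $\mathfrak{k}$ is linear conjugation $A\mapsto g^{-1}Ag$, so that $0$ is a fixed point and the straight-line contraction is equivariant. But this is not the action used in the paper: the relevant action is the \emph{affine} gauge action
\[
A^{g}=g^{-1}Ag+g^{-1}[D,g],
\]
as defined just after this lemma (and already in the Introduction). With this action $0^{g}=g^{-1}[D,g]$ is nonzero for generic $g\in G$, so the origin is not a fixed point, and a direct check gives
\[
((1-t)A)^{g}=(1-t)g^{-1}Ag+g^{-1}[D,g]\neq (1-t)A^{g}
\]
for $t\neq 0$. Hence neither step of your homotopy survives: $F(\cdot,t)$ is not equivariant for $t>0$, and the endpoint $f(0)$ need not commute with $G$.

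The paper's proof avoids this by not relying on any fixed point. Instead it invokes Kuiper's theorem: the contractibility of $U(H)$ produces a map $r\colon\mathfrak{k}\to U(H)$ with $r(A^{g})=g^{-1}r(A)$, and conjugating $f$ along a contraction $r_t$ of $r$ yields the desired homotopy to an invariant map. This argument works for any $G$-action on $\mathfrak{k}$, in particular the affine one, whereas your approach would only go through for a linear action with a global fixed point.
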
 

\begin{proof} The  $G$-action on $\mathfrak k \times U(H)$ given by $(A, u) \mapsto (A^g, g^{-1} u g)$ defines a principal $U(H)$ bundle over $ \mathfrak k/G.$ This
bundle is trivial by Kuiper's theorem, so there exists a trivialization, given by a map $r\colon \mathfrak k \to U(H)$ such that $r(A^g) = g^{-1} r(A).$ Let $r_t$ with
$0\leq t\leq 1$ be a contraction of this map, $r_0(A) =1$ and $r_1(A) =r(A).$ Define $f_t(A) = r_t(A)^{-1} f(A) r_t(A).$ Then $f_0 = f$ and $f_1(A^g) = f_1(A)$
is an invariant map. 
\end{proof}

Using $U_1(H)$ as the classifying space for the odd $K$-group and a $G$-invariant representative of an element $f$ in $K^1(\mathfrak k/G)$, its Chern character is given by pulling back the odd generators $\rho_{2k+1}= \text{tr}\, (g^{-1}dg)^{2k+1}$ of the cohomology of $U_1(H)$ by the map $f$. If $f$ is not $G$-invariant, the forms $f^*\rho_{2k+1}$ are no longer basic forms on the bundle $\mathfrak k \to \mathfrak k/G.$ However, the pullbacks are still $G$-invariant and furthermore they vanish when all the arguments are
vertical vector fields; this follows from the formula
$$ \tr\, [(g^{-1}fg)^{-1} d(g^{-1}fg)]^{2k+1} = \tr\, (f^{-1}df)^{2k+1} + \dots$$
where the dots represent a differential polynomial in the Maurer-Cartan 1-forms $f^{-1} df$ and $g^{-1} dg$ which is at least of degree one in $g^{-1}dg$. Let $f^{*} \rho_{2k+1}=db_{2k}$ and let $\theta_{2k}$ be the restriction of the form $b_{2k}$ in the vertical directions on $\mathfrak k.$ Then $\theta_{2k}$ defines an $\RR$-valued cocycle of degree $2k$ on the Lie algebroid $\frak g\ltimes \frak k$. 
This proves

\begin{proposition} Let $G$ be a Lie group that acts smoothly on a contractible manifold  $\frak k$. Let $f\colon \frak k \to \mathcal F_{*}$ be a $G$-equivariant family representing an element in $K^1(\mathfrak k/G)$. Then the Chern character of $f$ is
given by the even cocycles $\theta_{2k}$ in the Lie algebroid cohomology $H^{2k}(\frak g\ltimes \frak k,\RR)$. 
\end{proposition}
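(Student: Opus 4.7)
The plan is to formalize the construction sketched in the paragraph preceding the statement: pull back the standard odd generators of $H^*(U_1(H), \mathbb{R})$ along a representative $f$, find primitives on the contractible total space $\mathfrak{k}$, and restrict them to the vertical distribution to produce Lie algebroid cocycles.

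First I would model $K^1(\mathfrak{k}/G)$ using $U_1(H)$, so that the given $G$-equivariant $f \colon \mathfrak{k} \to \mathcal{F}_{*}$ is replaced by a $G$-equivariant map, again denoted $f$, into $U_1(H)$. The Chern character of $f$ is represented at the chain level by the closed forms $f^*\rho_{2k+1}$ with $\rho_{2k+1} = \text{tr}\,(g^{-1}dg)^{2k+1}$. The identity displayed in the preamble shows that $f^*\rho_{2k+1}$ is $G$-invariant and vanishes whenever all arguments are vertical tangent vectors, because the corrections to $\text{tr}\,(f^{-1}df)^{2k+1}$ are differential polynomials of positive degree in the fundamental-vector-field-dual $g^{-1}dg$.

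Next, since $\mathfrak{k}$ is contractible, each closed form $f^*\rho_{2k+1}$ is exact; write $f^*\rho_{2k+1} = d b_{2k}$ for some $(2k)$-form $b_{2k}$. Define $\theta_{2k}$ as the restriction of $b_{2k}$ to the vertical distribution spanned by the fundamental vector fields of the $G$-action, so that $\theta_{2k} \in C^{2k}(\mathfrak{g} \ltimes \mathfrak{k}, C^\infty(\mathfrak{k}))$ under the standard identification of vertical forms with Chevalley--Eilenberg cochains on the action Lie algebroid. The Lie algebroid differential of $\theta_{2k}$ coincides with the vertical restriction of $d b_{2k} = f^*\rho_{2k+1}$, which vanishes by the previous step; hence $\theta_{2k}$ is closed. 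Its class in $H^{2k}(\mathfrak{g} \ltimes \mathfrak{k}, \mathbb{R})$ is independent of the choice of primitive, since any two primitives differ by a closed form on the contractible $\mathfrak{k}$, hence by an exact one, whose vertical restriction is a Lie algebroid coboundary.

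Finally, to identify $[\theta_{2k}]$ with the Chern character of $[f]$, I would apply the transgression prescription from the opening of this section: lift a cycle on $\mathfrak{k}/G$ to a chain $\hat{s}_{2k+1}$ on $\mathfrak{k}$, observe that $\partial \hat{s}_{2k+1}$ is vertical, and compute $\langle \theta_{2k}, \partial \hat{s}_{2k+1}\rangle = \langle b_{2k}, \partial \hat{s}_{2k+1}\rangle = \langle d b_{2k}, \hat{s}_{2k+1}\rangle = \langle f^*\rho_{2k+1}, \hat{s}_{2k+1}\rangle$, which is precisely the pairing of the Chern character of $f$ with the original cycle on $\mathfrak{k}/G$. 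The main obstacle I anticipate is that because the $G$-action is not assumed free, the ordinary de Rham model on $\mathfrak{k}/G$ breaks down and the transgression argument must be executed in the vertical/Lie algebroid model rather than on the quotient directly; this is precisely the reason the Lie algebroid formulation is needed, and the same verification works uniformly without any free-action hypothesis.
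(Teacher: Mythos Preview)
Your proposal is correct and follows essentially the same approach as the paper: the paper's proof is precisely the paragraph preceding the proposition, which pulls back the generators $\rho_{2k+1}$ along $f$, observes $G$-invariance and vertical vanishing via the displayed trace identity, chooses a primitive $b_{2k}$ on the contractible $\mathfrak{k}$, and restricts it vertically to obtain the Lie algebroid cocycle $\theta_{2k}$. You supply more detail than the paper does---namely the well-definedness of the class under change of primitive and the transgression pairing identifying $[\theta_{2k}]$ with the Chern character---but these are exactly the checks the paper leaves implicit.
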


Returning to our  setting, let $\frak k$ denote the real Hilbert space of self-adjoint Hilbert--Schmidt operators with the bilinear form $\langle X,Y\rangle=\tr XY$. Then $\frak k$ is a contractible space carrying a non-free smooth affine action by our Banach Lie group $G$, 
$$G\times \frak k \to \frak k, \ \ \ (X,g) \mapsto g^{-1} X g + g^{-1} [D,g].$$ This action is well-defined as $\frak k$ is a two-sided ideal inside the algebra of bounded operator $B(H)$. Let $G\ltimes \frak k $ denote the associated action Lie groupoid and $\frak g\ltimes \frak k$ its  Lie algebroid.
We have seen that the Lie algebra cocycle $\omega$ can be integrated to a locally smooth group 2-cocycle defining a central extension $\widehat G$ of $G.$ We can view this also as an $S^1$-extension of the groupoid
$G \ltimes \frak k.$ While the central extension $\widehat G$ is non-trivial in cohomology with constant $S^1$-coefficients, we show next
that the cocycle remains non-trivial when we allow coefficients in the $G$-module of smooth $S^1$-valued functions
on $\frak k.$

\begin{lemma}  Any Lie groupoid cocycle in $H^2(G\ltimes \frak k,S^1)$ corresponding to the Lie algebroid
cocycle $\omega\in H^2(\frak g \ltimes \frak k, \mathbb R)$ is non-trivial.
\end{lemma}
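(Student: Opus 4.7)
My plan is to argue by contradiction, reducing the non-triviality of the groupoid cocycle to the non-triviality of $\omega$ as a Lie algebra cocycle, which is exactly Lemma~\ref{cocycle}.

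Since $\frak k$ is a contractible Hilbert space carrying a smooth $G$-action, the projection $G\ltimes\frak k\to G$ (collapsing $\frak k$ to a point) is a homotopy equivalence of classifying spaces and, at the level of smooth cochains, induces an isomorphism $H^2(G\ltimes\frak k,S^1)\cong H^2_{sm}(G,S^1)$. At the chain level, this is realized by pullback along the projection, with an inverse constructed from the smooth linear retraction $(A,t)\mapsto tA$ of $\frak k$ to the origin. Consequently, every smooth groupoid $2$-cocycle is cohomologous to an $A$-independent one of the form $c(g_1,g_2;A)=c_G(g_1,g_2)$ for some smooth group cocycle $c_G\colon G\times G\to S^1$. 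The Lie algebroid derivative of such a pulled-back cocycle is exactly the Lie algebra derivative of $c_G$, so any groupoid cocycle corresponding to $\omega$ is cohomologous to the pullback of a smooth group cocycle $c_G$ whose Lie algebra derivative is $\omega$.

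Now suppose, for contradiction, that this groupoid cocycle is trivial. Then $c_G=\delta\phi_0$ for some smooth $\phi_0\colon G\to S^1$ with $\phi_0(e)=1$, and differentiating $\phi_0$ at the identity produces a continuous linear functional $\theta_0\colon \frak g\to\RR$. A Baker--Campbell--Hausdorff expansion of $\phi_0(e^{sX}e^{tY})$, together with anti-symmetrization in $X$ and $Y$, then gives
\[
\omega(X,Y)=-\theta_0([X,Y])\qquad\text{for all }X,Y\in\frak g,
\]
exhibiting $\omega$ as a continuous Lie algebra coboundary. This directly contradicts Lemma~\ref{cocycle}, whose proof shows that the only $\theta$ trivializing $\omega$ on finite-rank operators has the form $-\tr DX+\lambda\tr X$, which fails to extend continuously to the bounded diagonal elements of $\frak g$.

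The main technical obstacle is the chain-level realization of the isomorphism $H^2(G\ltimes\frak k,S^1)\cong H^2_{sm}(G,S^1)$ in the smooth category over a Banach Lie groupoid. While the underlying topological statement is immediate from the contractibility of $\frak k$, producing a smooth chain homotopy requires some care in infinite dimensions; one natural approach is to integrate the pullback of a given cocycle along the smooth path $t\mapsto tA$, following the Weil-type prescription adapted to the action-groupoid setting.
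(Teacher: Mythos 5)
Your reduction hinges on the claim that the projection $G\ltimes \frak k\to G$ induces an isomorphism $H^2(G\ltimes\frak k,S^1)\cong H^2_{sm}(G,S^1)$, realized at the cochain level by the linear retraction $(A,t)\mapsto tA$. This is the genuine gap, and it is not a mere technicality of infinite dimensions. Groupoid cohomology of $G\ltimes\frak k$ with $S^1$-coefficients is group cohomology of $G$ with values in the nontrivial module $C^\infty(\frak k,S^1)$, where $G$ acts through the \emph{affine} action $A^g=g^{-1}Ag+g^{-1}[D,g]$. The retraction of $\frak k$ to the origin is not equivariant for this action ($0^g=g^{-1}[D,g]\neq 0$; in fact the action has no fixed point in $\frak k$ at all, since the formal fixed point $-D$ is unbounded and hence not Hilbert--Schmidt), so it does not induce a chain homotopy on the groupoid complex, and topological contractibility of $\frak k$ alone does not make the inclusion of constant coefficients a quasi-isomorphism. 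Worse, the statement you assume is essentially the statement to be proved: the whole point of the lemma (see the sentence preceding it in the paper) is that the class, known to be nontrivial with constant $S^1$-coefficients at the Lie algebra level, might a priori be killed once trivializing cochains are allowed to depend on $A$; declaring that every groupoid cocycle is cohomologous to an $A$-independent one begs exactly this question. Your final step (differentiating a hypothetical $\phi_0\colon G\to S^1$ to get a continuous $\theta_0$ with $\omega=-\delta\theta_0$, contradicting Lemma~\ref{cocycle}) is fine, but it only applies after the unjustified reduction.

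The paper's proof confronts the $A$-dependence head on instead of removing it. Working in $N\times N$ truncations, any trivializing $1$-cochain must have the form $c_1(A;X)=-\tr DX+b(A;X)$ with $b$ a $1$-cocycle. The truncated points $A=-tD$, $0\leq t\leq1$, \emph{are} fixed points of the diagonal subgroup $H$, so the integrated cochain $B(-tD;\cdot)$ is a character of $H$ for every $t$; since characters form a discrete lattice and $B$ is continuous in $t$, one gets $B(0;h)=\det(h)^p$, i.e.\ $b(0;X)=p\,\tr X$ on diagonal $X$. Hence at the single point $A=0$ the trivializing cochain is forced to be $-\tr DX+p\,\tr X$ on $\mathfrak h$, which has no finite limit as $N\to\infty$ for any choice $p=p(N)$. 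Note the essential use of integrability to an $S^1$-valued groupoid cochain (the character-lattice rigidity), which has no counterpart in your argument. If you want to salvage your strategy, you would need either an equivariant contraction of $\frak k$ (which does not exist here) or an independent proof that the pullback $H^2_{sm}(G,S^1)\to H^2(G\ltimes\frak k,S^1)$ is injective — and that is precisely the content of the lemma.
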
 
\begin{proof}  We show that   $\omega$ is non-trivial in the category of integrable Lie algebroid
cochains. It suffices to prove that it fails to be a coboundary at a single point $A\in \frak k$. Restricting first to the finite rank matrices  of  size $N\times N$ in $\mathfrak g$, the cocycle $\omega$ can be written as the coboundary of any 1-cochain $c_1$  of the form
$$c_1(A; X) = - \tr\, DX + b(A;X)$$
where $b$ is an arbitrary 1-cocycle, and this $c_1$  is the most general form such that $\omega =\delta c_1.$ 
The point $A= -D$, in the restriction to $N\times N$ matrices, is a fixed point for the $G$-action and therefore $X\mapsto b(-D;X)$ is a Lie algebra homomorphism to
the abelian algebra of complex numbers. But the only homomorphism for the Lie algebra of $N\times N$ matrices to $\CC$ is of the form $X\mapsto \lambda \text{tr}\, X$
for some complex number $\lambda$. This fixes the form of the 1-cocycle $b$ at the point $A=-D$, but this is not immediately helpful since $D$ does not belong to the space $\frak k$ in the limit $N\to \infty$.

Consider next the restriction of the cocycle $b$ to the abelian Lie algebra $\mathfrak h$ of $N\times N$ diagonal matrices. At this point we recall that we are really discussing the groupoid cohomology, so
all Lie algebroid cochains should be integrated to groupoid cochains.
The group cocycle $B$ corresponding to $b$ is a character of the group $H$ of invertible diagonal matrices. In fact, all the points $A= -tD$ for $0\leq t\leq 1$ are fixed
points of $H$ and therefore $h\mapsto B(-tD; h)$ is a character for all values of $t.$ But the characters form a discrete lattice in the space of maps $H\to \mathbb C^{\times}$
and by the continuity of $B$ we get $$B(-tD; h) = B(0; h) = B(-D; h) = \text{det}(h)^p$$ for some integer $p.$ Again, on the Lie algebra level, this means that
$b(0; X) = p\text{tr} \, X$ for $X\in \mathfrak{h}.$ Thus the most general form for the 1-cochain $c_1$ at $A=0$ and for $X\in \mathfrak h$ is $c_1(0;X) = \tr\, DX + p \tr \, X.$ This linear
function of $X$ does
not have a finite limit as $N\to\infty$ for any $p=p(N)$, and hence $\omega$ cannot be trivialized by any integrable 1-cochain.
\end{proof} 

\section{Integration of Lie algebroid cocycles}
In this section we shall extend some of the results on the construction of Lie group cocycles
by Wagemann and Wockel \cite{W} to the case of action Lie groupoids. This allows us to  integrate our cocycle $\omega$ explicitly
to a non-trivial Lie groupoid cocycle $c\in H^2(G\ltimes \frak k, S^1)$, which is locally smooth in an open identity neighbourhood of $G$. The latter determines a non-trivial $S^1$-gerbe on $G\ltimes \frak k$,
or equivalently, an abelian extension $\widehat G$ of $G$ by the group $C^\infty(\frak k, S^1)$.

Consider an action Lie groupoid $G \ltimes \frak k $, with a smooth right action of a Lie group $G$ on a smooth manifold $\frak k$. Let $\omega_p$ be a Lie algebroid $p$-cocycle in $H^p(\frak g\ltimes \frak k,\RR)$.
Let $\Sigma \subset G$ be a submanifold (possibly with boundary)  of dimension $p$ containing the identity in $G$ and let $A\in \frak k$ be a base point. Then we can define the pairing
$$ \langle\Sigma, \omega_p\rangle_A. $$
The pairing is defined by an integration over $\Sigma,$ viewing $\omega_p$ as a closed left-equivariant differential form $\tilde\omega_p$ on $G$ through
$$\tilde\omega_p(g; X_1, \dots ,X_p) = \omega_p(A\cdot g; X_1, \dots, X_p).$$
One should keep in mind that this depends on the choice of the base point $A.$  The cocycle $\omega_p$ is \it integral \rm if the form $\tilde \omega_p$ defines an 
integral cohomology class on $G$. Moreover, if $\frak k$ is connected the integrality does not depend on the choice of base point $A.$

Suppose that the homology groups  $H_k(G,\ZZ)$ vanish up to the dimension $k=p-1.$ Then any smooth integral $p$-cocycle $\omega_p$ defines a locally smooth groupoid cocycle $c_p.$  Given a pair of points $a,b\in G$ choose a path $s_1(a,b)$ from $a$ to $b.$
Given a triple of points $a,b,c \in G$ choose in addition a smooth singular 2-simplex $s_2(a,b,c)$ in $G$ with a boundary $b_1(a,b,c)$  consisting of the paths $s_1(a,b), s_1(b,c)$
and $s_1(c,a).$ This process can be continued to fix a singular $p$-simplex $s_p(a_1, a_2, \dots, a_{p+1})$ in $G$ for a given ordered subset $(a_1, \dots, a_{p+1}) \subset G.$

Let $g_1, \dots, g_p \in G$ and set $a_1 = e$ and $a_i = g_1g_2 \cdots g_{i-1}$ for $i=2, \dots, p+1.$ Then we can define a locally smooth $p$-cocycle on the Lie groupoid $G\ltimes \frak k$ by
$$c_p(A; g_1, \dots, g_p) =  e^{2\pi i\langle s_p(a_1, \dots, a_{p+1}), \omega_p\rangle_A}.$$

\begin{proposition} The cochain $c_p$  satisfies the cocycle condition
$$   c_p(A; g_1, \dots, g_p) \cdot c_p(A; g_1g_2, g_3, \dots, g_{p+1})^{-1}   \cdots  c_p(A; g_1, \dots, g_{p-1}, g_pg_{p+1})^{(-1)^p} $$
$$ \cdot  c_p(A\cdot g_1; g_2, \dots, 
g_{p+1})^{(-1)^{p+1}}=1,$$
\end{proposition}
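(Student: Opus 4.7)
The plan is to apply Stokes' theorem to a natural $(p+1)$-simplex in $G$ and exploit closedness of the $p$-form associated to $\omega_p$, together with the integrality hypothesis to clear $\mathbb{Z}$-ambiguities upon exponentiation.

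First, I would view $\omega_p$ as a closed differential $p$-form $\tilde\omega_p^A$ on $G$ by left-trivialization,
\begin{equation*}
\tilde\omega_p^A(g; v_1, \ldots, v_p) = \omega_p\bigl(A \cdot g;\, L_{g^{-1} *} v_1, \ldots, L_{g^{-1} *} v_p\bigr), \qquad v_i \in T_g G.
\end{equation*}
The Lie algebroid $p$-cocycle condition on $\omega_p$, which combines the Chevalley--Eilenberg differential in the $\mathfrak{g}$-direction with the anchor derivative in the $\mathfrak{k}$-direction, translates into ordinary closedness $d\tilde\omega_p^A=0$. Moreover, using $(A\cdot g_1)\cdot h = A\cdot(g_1 h)$ together with left-invariance of the trivialization yields the base-point transformation rule
\begin{equation*}
L_{g_1}^*\, \tilde\omega_p^A \;=\; \tilde\omega_p^{A\cdot g_1},
\end{equation*}
which is exactly what is needed to absorb base-point shifts by left translations.

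Next, with $a_1=e$ and $a_i=g_1\cdots g_{i-1}$ for $i=2,\ldots,p+2$, I would fix a smooth $(p+1)$-simplex $s_{p+1}(a_1,\ldots,a_{p+2})$ whose boundary is $\sum_{i=1}^{p+2}(-1)^{i-1} s_p(a_1,\ldots,\widehat{a_i},\ldots,a_{p+2})$; its existence is immediate in a neighbourhood of the identity by local contractibility of $G$, and globally by the same hypothesis $H_k(G,\mathbb{Z})=0$ for $k\le p-1$ that was used to choose the lower-dimensional simplices. Stokes' theorem together with closedness then gives
\begin{equation*}
0 \;=\; \int_{s_{p+1}(a_1,\ldots,a_{p+2})} d\tilde\omega_p^A \;=\; \sum_{i=1}^{p+2}(-1)^{i-1}\int_{s_p(a_1,\ldots,\widehat{a_i},\ldots,a_{p+2})}\tilde\omega_p^A.
\end{equation*}

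Third, I would match each boundary-face integral with a factor of the proposed cocycle relation. Omitting the terminal vertex $a_{p+2}$ reproduces the standard simplex for $(g_1,\ldots,g_p)$, contributing $c_p(A; g_1,\ldots,g_p)$; omitting an interior vertex $a_i$ with $2\le i\le p+1$ yields the standard simplex for the composed sequence $(g_1,\ldots,g_{i-2},g_{i-1}g_i,g_{i+1},\ldots,g_{p+1})$, contributing the matching combined-term factor; and omitting $a_1=e$ yields a simplex with vertices $g_1,g_1g_2,\ldots,g_1\cdots g_{p+1}$, which is the left-translate by $g_1$ of the standard simplex for $(g_2,\ldots,g_{p+1})$. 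The transformation rule above converts the integral over the translated simplex into an integral of $\tilde\omega_p^{A\cdot g_1}$ over the standard one, yielding $c_p(A\cdot g_1; g_2,\ldots,g_{p+1})$. Each identification is exact only up to the integral of $\omega_p$ over a $p$-cycle in $G$ (the pre-chosen $s_p$ may differ from the actual boundary face of $s_{p+1}$ by such a cycle), and integrality of $\omega_p$ forces the ambiguity into $\mathbb{Z}$. Exponentiating by $e^{2\pi i\,\cdot}$ removes the $\mathbb{Z}$-ambiguities, and since $S^1$ is abelian, the Stokes identity collapses into the stated multiplicative cocycle relation.

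The main technical hurdle is the dictionary in Step 1: verifying that the Chevalley--Eilenberg cocycle condition for the action Lie algebroid $\mathfrak{g}\ltimes\mathfrak{k}$, including its non-trivial anchor term encoding derivatives in the $\mathfrak{k}$-direction, matches ordinary closedness of $\tilde\omega_p^A$ on $G$ under left-trivialization. Once this is in place, the proof reduces to a geometric bookkeeping exercise driven by Stokes, the left-equivariance formula $L_{g_1}^*\tilde\omega_p^A=\tilde\omega_p^{A\cdot g_1}$, and integrality.
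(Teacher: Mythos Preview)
Your Stokes-based route has a gap at the filling step. To produce a $(p+1)$-simplex $s_{p+1}(a_1,\ldots,a_{p+2})$ whose boundary is the prescribed alternating sum $\sum_i (-1)^{i-1} s_p(a_1,\ldots,\widehat{a_i},\ldots,a_{p+2})$, you need that $p$-cycle to bound in $G$, i.e.\ its class in $H_p(G,\mathbb{Z})$ must vanish. The standing hypothesis is only $H_k(G,\mathbb{Z})=0$ for $k\le p-1$; this is precisely what was used to build the coherent family $\{s_k\}$ up through dimension $p$, but it says nothing about filling a $p$-cycle by a $(p+1)$-chain. Without $s_{p+1}$ the Stokes identity has nothing to act on, and closedness of $\tilde\omega_p^A$ is idle. (If instead you allow $s_{p+1}$ to have an arbitrary boundary, the Stokes step contributes nothing beyond what integrality already gives for the discrepancy cycle, so the detour becomes redundant.)

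The paper's argument avoids this by not filling anything. It observes directly that the signed sum of the $p$-simplices entering the coboundary --- including the left-translated one representing the shifted-basepoint term, via your relation $L_{g_1}^*\tilde\omega_p^A=\tilde\omega_p^{A\cdot g_1}$ --- is already a $p$-cycle: the coherent choice of the family $\{s_k\}$ forces the $(p-1)$-dimensional faces to cancel in pairs. Given a $p$-cycle, the integrality of $\omega_p$ yields $\langle\text{cycle},\omega_p\rangle_A\in\mathbb{Z}$ immediately, and exponentiation gives $1$. The operative mechanism is ``integrality evaluated on a cycle'', not ``Stokes on a fill''; your face-by-face identifications and the left-equivariance formula are correct and are exactly what is needed, but the $(p+1)$-simplex is neither available under the stated hypotheses nor required.
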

 
\begin{proof} For the proof one simply needs to check that the sum of the relevant singular simplices $s_p$ has a vanishing boundary; this is seen by observing that the
boundary components of the different simplices match pairwise. The pairing of a singular cycle with the integral cocycle $\omega_p$ gives an integer which multiplied by
$2\pi$ and exponentiated gives $1.$ 
\end{proof} 

\begin{remark} 
One can relax the requirement on the homology groups of $G$. If $H_k(G,\ZZ)=0$ for $k\leq p-2$ then the above construction works if $c_p$ defines
 a  Cheeger-Simons differential character  of degree $p$ on $G.$ Recall that a differential character of degree $p$ with values in $S^1$ is a homomorphism $f$ from the space of $p-1$
 dimensional homology cycles to $S^1$ such that if $s_{p-1}$ is a boundary of a $p$-dimensional chain $s_p$ then $f(s_{p-1}) = \exp{2\pi i\int_{s_p} \omega}$, where $\omega$
 is a uniquely fixed closed integral differential form on $G.$ 
 \end{remark}
 \begin{remark}  If there is an obstruction coming from the lower homology groups $H_k(G,\ZZ)$ the above construction can in some cases be modified to define a groupoid
 cocycle $c_p.$ For example, assume that $H_1(G,\ZZ)$ is a free $\mathbb Z$-module and we want to define a cocycle of degree $3.$ Fix a set of 1-cycles 
 $w_1,\dots , w_n$ in $G$ such that the homology classes $[w_i]$ form a basis over $\mathbb Z$ in $H_1(G,\ZZ).$ Then a linear combination of the cycles $w_i$ defines 
 the unit element in $H_1(G,\ZZ)$ if and only if that linear combination vanishes strictly as a 1-cycle. Next starting from a pair $g_1,g_2$ of group elements, 
 form the 1-cycle $b_1(e,g_1,g_1g_2)$ as above. We can write $[b_1] = \sum_{i} n_i [w_i]$ for a set of integers $n_i.$ Replace now $b_1 $ by
 $b'_1= b_1 - \sum_i n_i w_i.$  Then $b'_1$ is a boundary of a 2-chain $s_2.$ For a triple $g_1,g_2,g_3$ we do this construction for each face of the simplices corresponding
 to pairs of group elements. In this way one obtains a closed cycle which can be filled to a tetraed provided that $H_2(G,\ZZ)=0.$ 
\end{remark}
 
 \begin{example}  The above remark is relevant in gauge theory. Suppose that $G= Map_0(S^2, SU(n))$ with $n\geq 3.$ Here $Map_0$ denotes the group of based smooth maps $f,$ namely $f(x)=e$ for a given
 point $x\in S^2.$ Then $H_1(G,\ZZ) = \pi_1(G) = \mathbb Z$ whereas $H_2(G,\ZZ) =0$ and
 $H_3(G,\ZZ) = \mathbb Z = H^3(G, \mathbb Z).$  In this case the differential character of degree $3$ is given by a closed integral 3-form on $G$ coming from an integral
 5-form on $SU(n)$ by transgression.  
  \end{example} 
  \begin{example}  In a similar way we can construct  a 2-cocycle for the transformation groupoid consisting of the group $G=Map_0(S^3, SU(n))$ acting on $\mathfrak{su}(n)$-valued connection
 1-forms on a trivial $G$-bundle on $M$ through  gauge transformations $A\mapsto A^g = g^{-1} Ag + g^{-1} dg.$  In this case $G$ is disconnected, the connected components are labelled by elements
 in $\pi_3(SU(n)) = \mathbb Z.$  Fix a function $a\colon S^3 \to SU(n)$ that generates $\pi_3(SU(n)).$ Then $g_1 \in G$ is homotopic to some $a^k$ for $k\in \mathbb Z$ and
 $g_2$ is homotopic to some $a^l$. Set $g'_1 = a^{-k} g_1$ and $g'_2 = a^{-l} g_2$ and define a cycle $b_1(e, g'_1, g'_1g'_2)$ as before. We can then choose a 
 filling $s_2(e, g'_1, g'_1g'_2)$ since the homology of $Map_0(S^3, SU(n))$ in dimension one vanishes for $n\geq 3.$  A Lie algebra 2-cocycle can be constructed as in \cite{FS, Mi85}.  This construction of the Lie groupoid cocycle is essentially the same as in \cite{Mi87}.

  \end{example} 

A normalized $p$-cocycle on an action Lie groupoid which is smooth in an open neighbourhood of the unit element and with values in $S^1$  defines a \v{C}ech cocycle of degree
$p-1.$  This fact was utilized by Neeb \cite{N} in showing that for the construction of a smooth central (or abelian) extension of a Lie group, it is sufficient to construct
a 2-cocycle that is globally defined but smooth only in a neighbourhood of the unit element. Below we provide a somewhat simplified proof of (part of) Neeb's result when the group is connected. Actually, in \cite{N} the connectedness assumption was replaced by an additional condition on the groupoid cocycle that is automatically satisfied when $G$ is connected. 

\begin{theorem}[Neeb \cite{N}] Let $c$ be a normalized group 2-cocycle on a connected Lie group $G$ with values in a smooth $G$-module $A$. Normalization means that $c(g,e) = c(e,g)$ equals the unit element $1\in A$ for all $g\in G.$ If $c$ is smooth in an open neighbourhood
 $U$ of the identity element $e$, then it defines a smooth abelian extension $$1\to A \to  \widehat{G} \to G \to 1.$$ Conversely, any smooth abelian extension of $G$ by $A$ determines a locally smooth 2-cocycle $c$.
\end{theorem}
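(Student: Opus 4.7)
The theorem has two directions; the converse is standard, while the forward direction is where the connectedness hypothesis does work.

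For the converse, a smooth abelian extension $1\to A \to \widehat G \xrightarrow{\pi} G \to 1$ admits a smooth local section $\sigma\colon V\to \widehat G$ on some identity neighborhood $V$ with $\sigma(e)=\hat e$. I would extend $\sigma$ arbitrarily to a global set-theoretic section and define
$$c(g,h)=\sigma(g)\sigma(h)\sigma(gh)^{-1}.$$
This is a normalized 2-cocycle, smooth on a neighborhood of $(e,e)$ because $\sigma$ is smooth there and the group operations in $\widehat G$ are smooth.

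For the forward direction, I would construct $\widehat G$ as the set $G\times A$ with multiplication
$$(g_1,a_1)(g_2,a_2)=\bigl(g_1g_2,\,a_1+g_1\cdot a_2+c(g_1,g_2)\bigr),$$
the cocycle condition giving associativity and normalization making $(e,0)$ a two-sided unit. I would equip the open subset $U\times A\subset\widehat G$ with the product smooth structure coming from $U$ and from the smooth $G$-module $A$, so that multiplication and inversion are smooth whenever both arguments and their product stay in $U\times A$. To extend this structure globally I would declare, for every $\hat g_0\in\widehat G$, that left-translation $L_{\hat g_0}\colon U\times A\to \hat g_0\cdot(U\times A)$ is a diffeomorphism. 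Since $G$ is connected, $U$ generates $G$ and the translated charts cover $\widehat G$.

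The substantive content is the verification of chart compatibility and the global smoothness of multiplication. This reduces to the following claim: for every $h\in G$, the map $u\mapsto c(h,u)$ is smooth on the open set where both $u$ and $hu$ lie in $U$. I would prove this by induction on the length of a decomposition $h=u_1\cdots u_k$ with $u_i\in U$, applying the cocycle identity
$$c(u_1 h', u)=u_1\cdot c(h', u)+c(u_1, h'u)-c(u_1, h'),$$
with $h'=u_2\cdots u_k$, to express $c(h,\cdot)$ in terms of $c(h',\cdot)$ and values of $c$ whose first argument lies in $U$. Connectedness of $G$ is precisely what guarantees that such a decomposition exists, and this is where the hypothesis enters; in Neeb's general version an extra condition on $c$ replaces it.

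The main obstacle is that this propagation argument must produce a \emph{single} well-defined smooth structure on $\widehat G$, independent of the chain of $U$-products used to reach each point, and must simultaneously yield smooth multiplication and inversion at every point. Once this is established, projection on the first factor is a smooth surjective homomorphism $\widehat G\to G$ with kernel $\{e\}\times A\cong A$, producing the asserted smooth abelian extension.
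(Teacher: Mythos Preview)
Your overall strategy is sound and is essentially Neeb's original argument: realize $\widehat G$ set-theoretically as $G\times A$, put the product smooth structure on $U\times A$, propagate by left translations, and reduce chart compatibility to smoothness of $u\mapsto c(h,u)$ near $e$ for every $h$, proved by induction on the $U$-word length of $h$ using connectedness. The paper takes a different route in the forward direction: rather than an inductive propagation, it builds the bundle directly from \v{C}ech data, defining transition functions $f_{ij}(x)=c(a_i,a_i^{-1}x)^{-1}c(a_j,a_j^{-1}x)$ on the cover $U_i=a_iU$ and then, by repeated use of the cocycle identity and normalization, rewrites $f_{ij}$ as $c(a_i,a_i^{-1})^{-1}\,a_i\cdot[c(a_i^{-1},a_j)\,c(a_i^{-1}a_j,a_j^{-1}x)]$, in which the only $x$-dependent factor has both arguments in $V$. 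This gives smoothness of the transition functions in one stroke, without induction; on the other hand the paper then explicitly defers to Neeb for the global smoothness of the product, so neither account is self-contained.

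One concrete point in your induction needs repair. You peel off $u_1$ on the \emph{left}, writing $h=u_1h'$ and using
\[
c(u_1h',u)=u_1\cdot c(h',u)+c(u_1,h'u)-c(u_1,h').
\]
The term $c(u_1,h'u)$ requires $c(u_1,\cdot)$ to be smooth near $h'$, which you do not have: the hypothesis only gives smoothness near $(e,e)$, and $h'u$ has no reason to lie in $U$. The fix is to peel from the \emph{right}: write $h=h'u_k$ and use the cocycle identity with $(g_1,g_2,g_3)=(h',u_k,u)$ to get
\[
c(h'u_k,u)=c(h',u_ku)+h'\cdot c(u_k,u)-c(h',u_k).
\]
Now $c(u_k,u)$ is smooth since $u_k,u\in U$, the last term is constant, and $c(h',\cdot)$ is smooth near $e$ by the inductive hypothesis, so $c(h',u_ku)$ is smooth for $u$ in $u_k^{-1}W_{h'}$. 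With this correction your argument goes through.
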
 

\begin{proof}  
Suppose first that there is a smooth abelian extension $\widehat{G}$ of $G$ by $A.$ Then we can choose a 
global (discontinuous)  section $\psi\colon G \to \widehat{G}$ such that $\psi$ is smooth in an open set $U$ containing $e.$ We define the 2-cocycle $c$ with respect 
to the section $\psi$ by
$$c(g_1, g_2) = \psi(g_1g_2) \psi(g_2)^{-1} \psi(g_1)^{-1}.$$
By construction it  satisfies the cocycle condition
$$ c(g_1, g_2) c(g_1g_2, g_3) = c(g_1, g_2g_3)[g_1 \cdot c(g_2, g_3)] $$
and is normalized. It is also smooth in an open neighbourhood $V$ of unity such that $V^2 \subset U.$

Choose an open cover of $G$ using
the left translated sets  $U_i = a_i U$ with $a_i \in G.$
Smooth local sections can be defined as
$$\phi_i(x) = \psi(a_i) \psi(a_i^{-1} x) = c(a_i, a_i^{-1} x)^{-1}\psi(x) $$
for $x\in U_i$ with transition functions $f_{ij}(x) = c(a_i, a_i^{-1} x)^{-1} c(a_j, a_j^{-1} x)$ on the overlaps $U_i\cap U_j.$  

The argument can then be reversed. Suppose that $c$ is a global cocycle, smooth in an open identity neighbourhood $V$ and normalized. 
Choose a smaller open neighbourhood $U$ of $e$ such that $U= U^{-1}$ and $U^2 \subset V.$ This can be achieved using the exponential map
from the Lie algebra to the Lie group. Choose an open cover of $G$ as before and define  $f_{ij}(x) = c(a_i, a_i^{-1}x)^{-1} c(a_j, a_j^{-1} x)$ for $x\in U_i \cap U_j.$ These functions
clearly satisfy the \v{C}ech cocycle property 
$$f_{ij}(x) f_{jk}(x) = f_{ik}(x)$$
on triple overlaps. However, since the factors defining $f_{ij}$ are not separately smooth,
we have to prove the smoothness in their domain of definition. Using the cocycle and the normalization property repeatedly we can write
\begin{align*}  f_{ij}(x) &= c(a_i, a_i^{-1}x)^{-1} c(a_j, a_j^{-1}x) \\
&=  c(a_i, a_i^{-1})^{-1} c(e,x)^{-1} a_i\cdot [c(a_i^{-1},x)]c(a_j, a_j^{-1} x)\\
&= c(a_i, a_i^{-1})^{-1} a_i\cdot c(a_i^{-1},x) c(a_j, a_j^{-1} x)\\
&= c(a_i, a_i^{-1})^{-1}  a_i\cdot [ c(a_i^{-1},x) a_i^{-1}\cdot c(a_j, a_j^{-1}x)]\\
&= c(a_i, a_i^{-1})^{-1}  a_i \cdot [c(a_i^{-1}, a_j) c(a_i^{-1} a_j, a_j^{-1} x)] . \end{align*}
The last expression shows that the transition function $f_{ij}$ is locally smooth in the argument $x.$  This is because $x\in U_i\cap U_j$ can be written as
$x= a_i y = a_j z$ for $y,z\in U.$ Now $a_i^{-1} a_j = yz^{-1} \in V$ and thus the middle factor is smooth in $x.$ Thus we have recovered the eventually topologically twisted
extension $\widehat G$ in terms of the local transition functions.

To  complete the proof we still need to check that the local 2-cocycles 
$$c_{ijk}(x,y) = c(a_k, a_k^{-1} xy)c(x,y) c(a_i, a_i^{-1} x)^{-1} [x\cdot c(a_j, a_j^{-1}y)^{-1}] $$
are smooth; here $x\in U_i, y\in U_j$ and $xy\in U_k.$ We suspect that this can be done without the connectedness assumption, but the details remain to be worked out. We refer to \cite{N} for a  proof of the global smoothness of the product, Theorem C.2 
and Proposition II.6. 

\end{proof}

\section{Families of cubic Dirac operators}

Let $\frak g$ be a finite dimensional quadratic complex Lie algebra with a basis $\{e_i\}$. The cubic Dirac operator $\mathcal D$ is an odd element of the quantum Weil superalgebra $\mathcal W(\frak g) = U(\frak g)\otimes Cl(\frak g)$ \cite{AM, KT, Ko, La}. Denoting by $e^i$ the dual basis with respect to the invariant bilinear form, $\gamma(e^i) = \gamma^i$ the generators of the Clifford algebra $Cl(\frak g)$ and $\lambda_{ijk}$ the structure constants, then
$$ \mathcal D  = \sum_{i=1}^{\text{dim}\ \frak g}e_i\gamma^i+\frac{1}{3}\widetilde{s}_i\gamma^i= \sum_{i=1}^{\text{dim}\ \frak g} e_i\gamma^i-\frac{1}{12}\sum_{i,j,k=1}^{\text{dim}\ \frak g} \lambda_{ijk}\gamma^i\gamma^j\gamma^k .$$

We would like to extend this construction to the complexified Lie algebra of our Banach Lie group $G$, 
$$\frak g_\CC= \Big\{(X_{ij})_{i,j\in\ZZ} \in \CC  \ \big | \ \sup_{i\in\ZZ}|X_{ii}| + \sum_{i,j\in\ZZ} |(i-j)X_{ij}|^2< \infty\Big\}.$$
For this it is necessary to modify the definition of $\mathcal W(\frak g)$  in order to make sense of the Clifford algebra. Recall that the space $\frak k$ of self-adjoint Hilbert--Schmidt operators is a 
real Hilbert space with the inner product
$\langle X,Y\rangle=\tr XY$. The complexification $\frak k_{\CC}$ is equipped with the
Hermitian inner product $\langle X,Y\rangle = \text{tr}\, X^* Y.$ The space  $\frak k_{\CC}$ is further an orthogonal $\frak g_{\CC}$-module under the adjoint action of $\frak g_{\CC}$ on $\frak k_{\CC}$. Thus it is possible to associate a quantum Weil algebra $\mathcal W(\frak g, \frak k) = U(\frak g_{\CC})\otimes Cl(\frak k_{\CC})$  to the pair $(\frak g,\frak k)$. Let $\{e_{ij}\}_{i,j\in\ZZ}$ denote the standard basis for both $\frak g_{\CC}$ and $\frak k_{\CC}$ with matrix elements  $(e_{ij})_{ \ell m} = \delta_{i \ell}\delta_{jm}$,  satisfying the commutation relations  of a level $k$ central extension,
$$[e_{ij},e_{\ell m}]=\delta_{j\ell}e_{im}-\delta_{im}e_{\ell j} + k\, \delta_{j\ell}\delta_{im} (\ell-i)$$ 
and with the dual basis $e^{ij}=e_{ji}$ with respect to the above inner product. There exists a \emph{formal} cubic Dirac operator in $\mathcal W(\frak g, \frak k)$ given by
$$\mathcal D = \sum_{i,j\in\ZZ} e_{ij}\gamma_{ji}+\frac{1}{3}\widetilde{s}_{ij}\gamma_{ji},$$
where $\widetilde{s}\colon \frak g_{\CC} \to \frak{spin(k_{\CC})} \subset Cl(\frak k_{\CC})$ is the spin lift of the adjoint representation $\text{ad}\colon \frak g_{\CC} \to \frak{so(k_{\CC})}$ and $\gamma_{ij} = \gamma(e_{ij})$ are the generators of $Cl(\frak k_{\CC})$ satisfying the Clifford relation $[\gamma_{ij} , \gamma_{k\ell} ]_+ = 2\delta_{i\ell}\delta_{jk}$. The expression for $\mathcal D$ is ill-defined as it involves infinite sums and it is therefore necessary to introduce certain operator subtractions to obtain a well-defined element in a suitable completion of $\mathcal W(\frak g, \frak k)$. In other words we seek to construct a $\widehat{\frak g}$-module $V \otimes \mathbb S$ and define the completion $\widehat{\mathcal W}(\frak g, \frak k)$ as the endomorphism algebra $\text{End}( V \otimes \mathbb S)$ in which the infinite sums in $\mathcal D$  make sense.

The space of Hilbert--Schmidt operators has a natural orthogonal decomposition $\frak k_{\CC} = \frak k_+\oplus \frak k_0 \oplus \frak k_-$, where $\frak k_\pm$ are the isotropic subspaces of strictly upper and lower triangular matrices and $\frak k_0 \cong \ell_2$ is the subspace of diagonal matrices. The spin representation of $Cl(\frak k_{\CC})$ is defined by 
$$\mathbb{S} = \text{S}_{\frak k_0} \otimes \bigwedge \frak k_-,$$
 where $\text{S}_{\frak k_0}$ is a fixed spin module of $Cl(\frak k_0)$. The latter is constructed analogously by first considering $\frak k_0$ as a real Hilbert space with the $\ell_2$-inner product. Then any complex structure $J\in O(\frak k_0)$ with $J^2=-1$ turns the complexification $\frak k_{0,\CC}=\frak k_0 \otimes_\RR \CC$ into a complex polarized Hilbert space $\frak k_{0,J}$, with the Hermitian inner product $\langle X,Y\rangle_\CC = \langle X,Y\rangle + i \langle X,JY\rangle$. The splitting $\frak k_{0,J} =\frak k_{0,J}^+\oplus \frak k_{0,J}^-$ is defined by the projection operators $P_\pm=\frac 12 (I\pm iJ)$ and the spin module is given by $\text{S}_{\frak k_0} = \bigwedge \frak k_{0,J}^-$. Any other complex structure that differs from $J$ by a Hilbert--Schmidt operator determines a unitarily equivalent representation. Furthermore, since two such complex structures are related by conjugation by an element in the restricted orthogonal group $O_{res}(\frak k_{0,J}^+\oplus \frak k_{0,J}^-)$, the possible inequivalent spin modules of $Cl(\frak k_0)$ are parametrized by $O(\frak k_0)/O_{res}(\frak k_{0,J}^+\oplus \frak k_{0,J}^-).$

In terms of the basis $\{e_{ij}\}_{i,j\in\ZZ}$ the spin representation $\widetilde{s}\colon \frak g_{\CC} \to \text{End}(\mathbb S)$ is formally given  by
$$\widetilde{s}_{ij} =\frac 12 \sum_{\ell\in \ZZ} \gamma_{i\ell}\gamma_{\ell j} .$$
This sum is of course divergent but it can be regularized by applying the standard normal ordering prescription. In the following we shall restrict to operators in $\frak g_{\CC}$ with zero entries outside of a block of size $2N$ so that all sums become finite. We  pass to the limit $N\to \infty$ in the strong operator topology, only in the final regularized expressions. Let
 \begin{align*} s_{ij} &= \frac 12  \sum_{\ell}  :\gamma_{i\ell}\gamma_{\ell j}: = \begin{cases}\frac 12  \sum_{\ell}  \gamma_{i\ell}\gamma_{\ell j} \,\,\, \ \ \mbox{if}\  i\neq j \\ \frac 12\sum_{\ell<i} \gamma_{i\ell}\gamma_{\ell i}  - \frac12 \sum_{\ell > i} \gamma_{\ell i}\gamma_{i \ell} \,\,\, \ \ \mbox{if}\  i= j\end{cases} \\
&= \widetilde{s}_{ij} -(N-i+\frac12)\delta_{ij} .
\end{align*}

As a result of normal ordering, the spin operators satisfy the commutation relations 
\begin{align*}
[ s_{ij} ,  s_{\ell m} ] &= \delta_{j\ell}s_{im}-\delta_{im}s_{\ell j} +\delta_{j\ell}\delta_{im}(N-i+\frac12)-\delta_{im}\delta_{\ell j}(N-\ell+\frac12) \\
&= \delta_{j\ell}s_{im}-\delta_{im}s_{\ell j} +\delta_{j\ell}\delta_{im}(\ell-i),
\end{align*}
where the central term is precisely $\omega(s_{ij},s_{\ell m})$. Since the dependence on $N$ cancels out it is possible to extend these operators  to the whole Lie algebra by taking the limit $N\to \infty$ and this yields a projective representation $\mathbb S$ of $\frak g$ of level $1$. One easily checks that the following commutation relations hold,
\[  [ s_{ij} ,  \gamma_{\ell m} ] =  \delta_{j\ell}\gamma_{im}-\delta_{im}\gamma_{\ell j}, \ \ \   \ \ \ 
\sum_{\ell , m}  [ s_{ij}, s_{\ell m} \gamma_{m\ell} ] =  (j-i)\gamma_{ij}.
\]

With our choice of the normal ordering, $s_{ij}v=0$ for all $i\leq j$ when $v$ is the vector in the
`vacuum sector', that is, it is of the form $v=w\otimes 1 \in \text{S}_{\frak k_0} \otimes \bigwedge \frak k_-.$ On these vectors $\gamma_{ij}v =0$ for $i<j.$ In particular, the weight of the spin module is equal to zero. It should be kept in
mind that the highest weight space is not spanned by a single vector but it is the infinite-dimensional
irreducible module $\text{S}_{\frak k_0}$ of the Clifford algebra.

Next we would like to construct a Verma module  for the central extension $\widehat{\frak g}_\CC$. There is a natural triangular decomposition $\widehat{\frak g}_\CC =\frak g_+\oplus \frak h\oplus \CC K\oplus \frak g_-$, where $\frak g_\pm$ correspond to strictly upper respectively lower triangular matrices and $\frak h \cong \ell_\infty$ is the infinite dimensional Cartan subalgebra of diagonal matrices. Let $\Lambda = \ell_\infty^*$ denote the weight space. Starting with the universal enveloping algebra $U(\widehat{\frak g}_\CC)$ and a weight $\lambda \in \Lambda$, the Verma module is defined by the quotient $ V_{(\lambda, k)}=U(\widehat{\frak g}_\CC)/I(\lambda,k)$ where $I(\lambda,k)$ is the left ideal generated by $\frak g_+$ and elements $h-\lambda(h)\textbf{1},h\in \frak h$ and $K- k\textbf{1}$. Let $v_\lambda$ denote the image of the identity element $\bf{1}$ of $U(\widehat{\frak g}_\CC)$ in the quotient, then $e_{ii}v_\lambda = \lambda_i v_\lambda$ with $\lambda_i = \lambda(e_{ii})$ and $e_{ij}v_\lambda = 0 $ for $i<j$.

If the representation of the Lie algebra $\widehat{\frak g}_\CC$ can be integrated to a representation of
$\widehat G$, then the components $\lambda_i$ have to belong to $\mathbb{Z}.$
In a unitary representation we have in addition the positivity constraints 
$$0\leq||e_{ji}v_\lambda||^2=(v_\lambda, e_{ij}e_{ji}v_\lambda) = (v_\lambda,(e_{ii}-e_{jj}+k(j-i))v_\lambda), \ \ \ \ i \leq j,$$ 
which implies 
$$\lambda_i-\lambda_j-k(i-j)\geq0, \ \ \ \ i\leq j.$$
The problem of integrating unitary representations of Banach Lie algebras is far from trivial and while there exists 
integrability criteria extending Nelson's famous criterion \cite{Me, VTL}, we shall consider a different route to constructing integrable $\widehat{\frak g}_\CC$-modules.
 
Recall that there is a natural homomorphism $X\mapsto  \text{ad}_X$ from $\frak g$ to  the Lie algebra $\frak u_{res}(\frak g_+ \oplus \frak g_-)$ by the adjoint action of $\frak g.$  Here $\frak g_+$ denotes the span of the generators $e_{ij}$ with $i\leq j$ and $\frak g_-$ by the generators $e_{ij}$ with $i>j.$ In this
action $\text{ad}_{e_{ii}}$ is represented by the matrix 
$$\text{ad}_{e_{ii}} = \sum_{m\in \ZZ} (E_{im} -E_{mi})$$
where $E_{ij}$ is the diagonal linear operator $E_{ij} \cdot e_{mn} = \delta_{im}\delta_{jn}  e_{mn}.$
The Lundberg cocycle $\omega_L(X,Y) = -\frac12 \text{tr}\, X[\epsilon, Y]$ on ${\frak u}_{res}(\frak
g_+ \oplus \frak g_-)$ induces then the level two cocycle $\omega(X,Y) = \omega_L(\text{ad}_X, \text{ad}_Y) = 
2\text{tr}\, X [D,Y]$ on $\frak g.$ Thus any integrable representation of $\hat{\frak u}_{res}$ gives 
an integrable representation of $\hat{\frak g}.$ In particular, the integrable highest weight representations of $\hat{\frak u}_{res}$ give (in general reducible) highest weight representations
of $\hat{\frak g}.$ The highest weight of the former is given by the set $\mu(E_{ij}) = \mu_{ij}$ 
of integers. If the level of the ${\frak u}_{res}$ representation is $k$ then $|\mu_{ij}| \leq k.$
Only a finite number of the $\mu_{ij}$'s can be nonzero. The weight $\mu$ of $\frak u_{res}$
determines a weight $\lambda$ of $\frak g,$ 
$$\lambda_i = \lambda(e_{ii}) =\mu(\text{ad}_{e_{ii}})= \sum_{m\in \ZZ} (\mu_{im} - \mu_{mi})$$
which is also integral and has only a finite number of nonzero entries.

The doubling of the level from the restriction $\frak g \subset \frak u_{res}(\frak g_+ \oplus
\frak g_-)$ results from the fact that the adjoint representation of $\frak g$ is real, so the
relevant embedding is actually $\frak g \subset \frak o_{res} \subset \frak u_{res}.$ 
The space $\frak g$ of skew-adjoint operators should be considered as a real Hilbert space
and its complexification $\frak g_{\mathbb C}= \frak g_+ \oplus \frak g_-$ defines the polarization 
in the complexification $\frak o_{res, \mathbb C}.$ The geometric construction of representations
of $\frak u_{res}$ acting on holomorphic sections of a determinant bundle on a homogeneous
space of $U_{res}$ can be refined for $\frak o_{res}$ as a representation acting on Pfaffians
which are square roots of determinants \cite{PS}. The square root
construction leads to a basic representation of $\frak o_{res},$  with the value of the center
equal to 1, which then corresponds to the basic cocycle $\text{tr}\, X[D,Y]$ of $\frak g.$ 

The ordering of the integers $\mu_{ij}$ depends on the choice of simple root vectors; we can fix
a linear ordering of the index pairs  $(ij) > (mj)$ for $i >m$ and $(ij)> (im)$ for $j < m.$
In this ordering we can fix the simple root vectors as $E_{(ij), (i,j-1)}$ and $E_{(ij),(i+1,j)}$ in 
${\frak u}_{res}(\frak g_+
\oplus \frak g_-),$ following the standard notation that $E_{ab}$ means the matrix with all entries
equal to zero except the entry at the position $(ab)$ equal to 1. In this ordering the elements
$$\text{ad}_{e_{ij}} = \sum_{m\in \ZZ} (E_{(im),(jm)} - E_{(mj),(mi)})$$
are linear combinations of positive root vectors for $i<j.$ 

Thus   any integrable unitary  highest weight representation of $\hat{\frak u}_{res}(\frak g_+ \oplus \frak g_-)$  indeed gives by restriction an integrable unitary highest weight representation $V_\lambda$ of $\hat{\frak g}$. The cubic Dirac operator can now be defined by
$$\mathcal D = \sum_{i,j} e_{ij}\gamma_{ji}+\frac{1}{3}s_{ij}\gamma_{ji},$$
acting on the $\widehat{\frak g}_\CC$-module $H = V_\lambda \otimes \mathbb S$. The latter is generated by the highest weight subspace, denoted $v_\lambda$, which carries an irreducible representation of the Clifford algebra $Cl(\mathfrak{k}_0)$. The expression for $\mathcal D$ is however still problematic in the limit $N\to \infty$ and requires further regularization. We introduce an additional operator subtraction, 
$$\mathbb D = \sum_{i,j} e_{ij}\gamma_{ji}+\frac{1}{3} : s_{ij}\gamma_{ji}:,  $$
where the normal ordering is defined as
$$ \sum_{i,j} : s_{ij}\gamma_{ji} : = \sum_{i> j} s_{ij}\gamma_{ji} + \sum_{i\leq j}  \gamma_{ji}s_{ij} .$$
Note that here the normal ordering is not simply a subtraction by an (infinite) constant because of the nonzero commutators $[s_{ij},\gamma_{ji}] = \gamma_{ii} - \gamma_{jj}.$ Let $t_{ij}=e_{ij} + s_{ij}$, then we have

\begin{theorem} The cubic Dirac operator $\mathbb D$ extends to a well-defined  unbounded self-adjoint operator in the completion $\widehat{\mathcal W}(\frak g, \frak k) = \text{End}(V_\lambda \otimes \mathbb S)$ with a dense domain $$\text{Dom}(\mathbb D)=(V_\lambda \otimes \mathbb S)^{pol} \subset V_\lambda \otimes \mathbb S,$$ 
consisting of polynomials in the generators $t_{ij}$ and $\gamma_{ij}$ applied to the  highest weight subspace $v_{\lambda}$, and with the kernel 
$$Ker(\mathbb D) = \text{S}_{\frak k_0}. $$
\end{theorem}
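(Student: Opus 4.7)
My approach would be to establish the theorem in three stages: well-definedness of $\mathbb D$ on the dense polynomial domain, essential self-adjointness (which then gives the self-adjoint extension), and identification of the kernel.

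First I would verify that $\mathbb D$ acts in finite sums on the highest weight subspace $\text{S}_{\frak k_0}$. For $v$ in the vacuum sector, the annihilation conditions $e_{ij}v_\lambda=0$ for $i<j$, $\gamma_{ji}v=0$ for $j<i$, and $s_{ij}v=0$ for $i\leq j$ together force only the diagonal terms $i=j$ to survive in $\sum e_{ij}\gamma_{ji}v$, and the residual sum is finite because the integrable highest weight $\lambda$ has only finitely many nonzero entries. For the normal-ordered cubic piece $\tfrac{1}{3}:s_{ij}\gamma_{ji}:$, each of its two halves is killed by one of the same annihilation conditions, so the cubic term contributes nothing on $\text{S}_{\frak k_0}$. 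To extend the definition to all of $(V_\lambda\otimes\mathbb S)^{pol}$, I would compute the commutators $[\mathbb D,t_{ij}]$ and $[\mathbb D,\gamma_{k\ell}]$, using the bracket relations $[s_{ij},\gamma_{\ell m}]=\delta_{j\ell}\gamma_{im}-\delta_{im}\gamma_{\ell j}$ recorded in Section 5 to show that the dangerous sums collapse to finite ones via the Kronecker deltas; then for any polynomial $P$ in the $t_{ij}$ and $\gamma_{k\ell}$ the identity $\mathbb D(Pv)=P\,\mathbb D v+[\mathbb D,P]v$ reduces the action of $\mathbb D$ to a finite expression on the vacuum by induction on $\deg P$.

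Next, symmetry of $\mathbb D$ on $(V_\lambda\otimes\mathbb S)^{pol}$ follows from $e_{ij}^*=e_{ji}$, $s_{ij}^*=s_{ji}$, $\gamma_{ij}^*=\gamma_{ji}$ in the unitary representation, together with the vanishing $[s_{ii},\gamma_{ii}]=0$, which is precisely what is needed for the normal-ordering switch at $i=j$ in the cubic term to preserve the adjoint. For essential self-adjointness I would compute $\mathbb D^2$ directly on the polynomial domain, expecting, in analogy with Kostant's formula,
$$\mathbb D^2 \;=\; \mathrm{Cas}_{V_\lambda\otimes\mathbb S} \;+\; c\cdot\id,$$
where $\mathrm{Cas}$ is the quadratic Casimir built from the diagonal generators $t_{ij}$ and $c$ is a scalar depending on $\lambda$, the central charge, and the normal-ordering shifts. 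Since $\mathrm{Cas}$ is diagonalized by the weight decomposition of $V_\lambda\otimes\mathbb S$ and has spectrum bounded below, it is essentially self-adjoint on the polynomial domain; hence so is $\mathbb D^2$, the polynomial vectors are analytic for $\mathbb D$, and Nelson's analytic-vector theorem yields essential self-adjointness of $\mathbb D$.

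Finally, the identity $\mathbb D^2=\mathrm{Cas}+c$ gives the kernel: a vector lies in $\ker\mathbb D$ exactly when $\mathrm{Cas}$ takes the eigenvalue $-c$, and the normal-ordering prescription is designed precisely so that this minimum is attained on, and only on, the highest weight subspace $\text{S}_{\frak k_0}$. The main obstacle throughout will be controlling the regulated sums: the cubic term $:s_{ij}\gamma_{ji}:$ is not a mere scalar subtraction, because $[s_{ij},\gamma_{ji}]=\gamma_{ii}-\gamma_{jj}$, so normal ordering produces genuine operator corrections that must be tracked carefully when $\mathbb D$ is squared. Ensuring that these accumulated corrections conspire to produce a true scalar $c$, rather than a divergent or unbounded diagonal operator, is exactly the delicate content of the double normal-ordering scheme described at the start of Section 5 and is where the bulk of the technical work, and the key departure from the finite-dimensional Kostant construction, will lie.
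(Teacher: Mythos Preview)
Your overall strategy---control $\mathbb{D}$ on the polynomial domain via commutators, then analyze $\mathbb{D}^2$---matches the paper's, but the central expectation $\mathbb{D}^2 = \mathrm{Cas}_{V_\lambda\otimes\mathbb S} + c\cdot\id$ with scalar $c$ is precisely what fails here, and you correctly flagged it as the danger point. The double normal ordering does \emph{not} produce a scalar correction: as the introduction warns, the regularization ``involves subtraction by an unbounded operator.'' What one actually finds is $\mathbb{D}^2 = C + 2(k+1)\sum_i i\, t_{ii}$, where $C$ commutes with all generators (so acts as the scalar $\lambda^2 - 2(k+1)\sum_i i\lambda_i$ on the module), while the second term is an unbounded grading operator. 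Your kernel argument, which rests on $\mathrm{Cas}$ attaining the minimal eigenvalue $-c$, therefore does not go through as stated, and the appeal to Nelson's theorem via a Kostant-type identity is undermined.

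The paper bypasses the Kostant formula entirely. The key computations are the \emph{anticommutator} $[\mathbb{D},\gamma_{ij}]_+ = 2t_{ij}$ (not the commutator you propose) and $[\mathbb{D},t_{ij}] = (k+1)(i-j)\gamma_{ij}$; combining these yields $[\mathbb{D}^2,\gamma_{ij}] = 2(k+1)(i-j)\gamma_{ij}$ and $[\mathbb{D}^2,t_{ij}] = 2(k+1)(i-j)t_{ij}$. Together with the vacuum value $\mathbb{D}^2 v_\lambda = \lambda^2\, v_\lambda$, these relations diagonalize $\mathbb{D}^2$ explicitly on the ordered-monomial basis with eigenvalues $\lambda^2 + 2(k+1)\sum_s (i_s-j_s)$. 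Self-adjointness then follows from explicit diagonalizability in an orthogonal basis rather than from analytic-vector arguments, and the kernel is read off directly from the eigenvalue list. So the missing idea is to replace the hoped-for Casimir identity by the direct observation that $\mathbb{D}^2$ acts as a shifted degree operator on the polynomial domain.
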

\begin{proof} 
We begin by computing some relevant commutators, where throughout we consider a finite approximation with indices  in the range $[-N, N]$. Rewriting the normal ordered term $$\sum_{i,j} : s_{ij}\gamma_{ji} :\ = \sum_{i,j}  s_{ij}\gamma_{ji} + \sum_{i\leq j}(\gamma_{jj}-\gamma_{ii})= 
\sum_{i,j}  s_{ij}\gamma_{ji}  + 2\sum_{i}  i\gamma_{ii},$$ we have
 \begin{align*}
\sum_{\ell,m}  [:s_{\ell m}\gamma_{m\ell}:,s_{ij}] &= (i-j)\gamma_{ij} +2 (i-j) \gamma_{ij} \\
   & = 3(i-j) \gamma_{ij}.
  \end{align*}
Furthermore,
 \begin{align*}
\sum_{\ell, m}  [ s_{\ell m}\gamma_{m\ell }, \gamma_{ij}]_+ &= \sum_{\ell, m}s_{\ell m}   [\gamma_{ij}, \gamma_{m\ell }]_+ + \sum_{\ell, m} [ \gamma_{ij} , s_{\ell m}] \gamma_{m\ell}  \\
&=  2 s_{ij} + \sum_{\ell,m} (\delta_{\ell j} \gamma_{im} -\delta_{mi} \gamma_{\ell j} ) \gamma_{m \ell} \\
&= 2 s_{ij} + \sum_{m}  \gamma_{im}\gamma_{mj} -\sum_{\ell} \gamma_{\ell j}\gamma_{i \ell} \\
&= 6 s_{ij}  + 4 \delta_{ij} (N - i+\frac12)  - 2(2N+1) \delta_{ij}  \\
&= 6 s_{ij} - 4i\cdot \delta_{ij} 
 \end{align*}
 where we have used $2s_{ij} = \sum_m \gamma_{im} \gamma_{mj} - 2(N  - i +\frac12)\delta_{ij} .$  Next, using the expression $\sum_{i,j} : s_{ij}\gamma_{ji} :  =
\sum_{i,j}  s_{ij}\gamma_{ji}  + 2\sum_{i}  i\gamma_{ii}$, we obtain
 $$[ \sum_{\ell, m}   :s_{\ell m}\gamma_{m\ell }:, \gamma_{ij}]_+ =6 s_{ij}.$$ 
 In addition, $\sum_{\ell, m} [e_{\ell m}\gamma_{m \ell}, \gamma_{ij}]_+  = 2e_{ij}$ and thus
 $$ [\mathbb{D}, \gamma_{ij}]_+  = 2(e_{ij} + s_{ij}) = 2 t_{ij}.$$
 A similar computation, for a level $k$ central extension of the Lie algebra generated by $\{e_{ij}\}$, gives
 $$[\mathbb{D}, t_{ij}] = (i-j) (k+1) \gamma_{ij}.$$ 
 It follows that
 $$[\mathbb{D}^2, \gamma_{ij}] = -[\mathbb{D}, \gamma_{ij}]_+ \mathbb{D} + \mathbb{D} [\mathbb{D}, \gamma_{ij}]_+\\
 = -2t_{ij}  \mathbb{D} + \mathbb{D} \cdot 2t_{ij} = 2(i-j)(k+1) \gamma_{ij}$$
 and similarly
 $$[\mathbb{D}^2, t_{ij} ] = 2(i-j)(k+1) t_{ij} .$$
The final result does not depend on the cutoff parameter $N$ and hence we can pass to the limit $N\to \infty$ in the strong operator topology.  This proves that $\mathbb D$ is a well-defined operator in the algebra $End(V_\lambda \otimes \mathbb S)$.
  
 From the above commutation relations we can write a simple expression for the square of the operator
 $\mathbb{D},$
 $$\mathbb{D}^2 = \sum_{i,j\in \ZZ} :e_{ij}e_{ji}: +  \sum_{i,j\in \ZZ}  i : \gamma_{ij}\gamma_{ji}: $$
 where
 $$ :e_{ij}e_{ji}: = \begin{cases} e_{ij}e_{ji} \text{ if } j \leq i \\
 e_{ji}e_{ij} \text{ if } i < j \end{cases}. $$
 To show that this is the right expression one only needs to show (by a simple computation) that
 the commutator of $\mathbb{D}^2$ with $\gamma_{ij}$ and $t_{ij}$ agrees with the 
 already given formulas above and that the action of $\mathbb{D}^2$ on the highest weight subspace is correct.
 The last statement is true since $\mathbb{D}$ acting on the highest weight vectors $v_\lambda$ is
 a multiplication by $\sum_i \gamma_{ii}\lambda_i$ and the square of this is $\sum_i \lambda_i^2$,
 which agrees with the action of the quadratic expression above. 
Note that we are only considering representations of the Lie algebra which can be exponentiated to projective
representations of the group $G,$ so the numbers $\lambda_i$ must all be integers. Since the sum 
$\sum_i \lambda_i^2$ converges, it follows then that only a finite number of the components $\lambda_i$ are nonzero. 

The difference $C= \mathbb{D}^2 - \sum_i 2(k+1) i t_{ii}$ plays the role of the Casimir operator of $\widehat{\frak g}_\CC$
since it commutes with all the generators $\gamma_{ij}, t_{ij}.$ Its value on the vacuum subspace
$v_\lambda$ is equal to $\lambda^2 - \sum_i 2(k+1) i \lambda_i$  and this converges by the 
remark above for integrable representations. A further immediate consequence of the above commutation relations and the identity $\langle\mathbb D^2 v_\lambda,v_\lambda \rangle = \langle v_\lambda , \mathbb D^2 v_\lambda\rangle$ is that the operator $\mathbb D^2$ is unbounded, non-negative and symmetric on the dense domain $(V_\lambda \otimes \mathbb S)^{pol}$. It is also diagonalizable in a basis given by ordered monomials in the generators $\gamma_{ij}, t_{ij}$ acting on $v_\lambda$, which implies that $\mathbb D^2$ can be extended to a self-adjoint operator.

Finally, note that the eigenvalues of $\mathbb{D}^2$ have an infinite degeneracy due to the fact that the diagonal elements $\gamma_{ii}$ commute with $\mathbb{D}^2$. In particular, the vacuum $v_{\lambda}$ is
 not a single vector but an irreducible representation of the Clifford algebra $Cl(\frak k_0)$ generated by the
 elements $\gamma_{ii}.$ All these vectors have the same weight for the Cartan sublagebra
 spanned by the elements $t_{ii}$. This observation combined with the above commutation relations and the action of $\mathbb{D}^2$ on the highest weight subspace $v_\lambda$ imply that the kernel of $\mathbb{D}^2$ is the infinite-dimensional subspace $\text{S}_{\frak k_0}$.
The theorem follows by taking the square root of $\mathbb D^2$.

 \end{proof}

 Next we introduce a family of operators $\mathbb{D}_A$ parametrized by self-adjoint Hilbert--Schmidt operators
 $A\in \frak k,$ which play the role of gauge connections in our setting. We let $$\mathbb{D}_A = \mathbb{D} + (k+1)\sum_{i,j\in\ZZ} \gamma_{ij}A_{ji}.$$
 The connection $A$ transforms under a conjugation by $g\in G,$ represented as $\hat g$ in the projective representation of $G$ of level $k+1,$  as $A^g = g^{-1}A g + g^{-1}[D, g].$
Next we compute the square
\begin{align*}
\mathbb{D}_A^2 &= \mathbb{D}^2 + (k+1)[\mathbb{D}, \sum_{i,j\in\ZZ}\gamma_{ij} A_{ji}]_+ + (k+1)^2\sum_{i,j\in\ZZ} A_{ij}^2\\
&= \mathbb{D}^2 + 2(k+1)\sum_{i,j\in\ZZ} A_{ij}t_{ji} + (k+1)^2 A^2. 
\end{align*}
 
 We want to gather information about the spectrum of $\mathbb{D}_A.$ First, we observe that
 $A$ can be diagonalized by a 'gauge transformation' $g\in G.$ This follows from the fact that
 the unbounded self-adjoint operator $D_A = D +A$ can be diagonalized by a unitary transformation $g\in U(H).$
 But after diagonalization, $$g^{-1} D_A g = D + A' = D + g^{-1}A g + g^{-1}[D,g]$$ and since
 $g^{-1} A g$ is Hilbert--Schmidt by the $L_2$-property of $A$ and the off diagonal
 part of $A'$ vanishes, we conclude that $[D,g]$ is also Hilbert--Schmidt and so $g\in G.$ 
 
 We can thus assume that $A$ is diagonal without loss of generality, with diagonal values
 $A_{ii} = \mu_i.$ In this case
 \begin{align*}
 [\mathbb{D}^2_A , \gamma_{ij}] &= 2(k+1)(i-j+\mu_i -\mu_j) \gamma_{ij} ,\\
 [\mathbb{D}^2_A, t_{ij}] &= 2(k+1)(i-j+\mu_i-\mu_j)t_{ij}.\end{align*}
 But now
 $$(\mathbb{D}^2_A -\mathbb{D}^2)v_\lambda = \left((k+1)^2\sum_i \mu_i^2 + 2(k+1)\sum_i \mu_i \lambda_i\right)v_\lambda,$$
 that is, the vacuum eigenvalue of $\mathbb{D}^2_A$ is equal to $(\lambda + (k+1)\mu)^2.$ 
 The eigenvalues of $\mathbb{D}_A$ are then square roots of the eigenvalues of $\mathbb{D}^2_A.$
 
 The Hilbert space $H = \mathbb{V}_{\lambda} \otimes \mathbb{S}$ is spanned by the
 vectors
 $$v_{\lambda}^{(i), (j)}= 
t_{(i),(j)}\gamma_{(i),(j)} v_{\lambda}= 
 t_{i_1j_1}\cdots t_{i_p j_p} \gamma_{i_{p+1}, j_{p+1}} \cdots \gamma_{i_qj_q} v_{\lambda}$$
 with $i_s > j_s$ for $s=1,2,\dots q.$ 
 Using the commutation relations of $\mathbb{D}^2_A$ with $\gamma_{ij}, t_{ij}$ we conclude
 that
 $$ \mathbb{D}^2_A v_{\lambda}^{(i), (j)}= \left( (\lambda + (k+1)\mu)^2 \\
 + \sum_s  2(k+1) (i_s  - j_s  + \mu_{i_s} - \mu_{j_s}) \right) v_{\lambda}^{(i), (j)}.$$
 The sum over the index $s$ can be negative, so the eigenvalue of $\mathbb{D}^2_A$ can be zero for some parameters $\mu.$ However, for any given vector $\mu$ there can be at most
 a finite number of the integer sequences $(i_s > j_s)$ for which this is the case. The reason for this is
 that because $\mu^2$ is convergent, it follows that $\mu_i \to 0$ as $i\to \pm \infty$ and therefore for large values
 of the indices the positive integers $i_s -j_s$ dominate over the small, potentially negative, numbers
 $\mu_{i_s} - \mu_{j_s}.$ 
 
 By the equivariance property
 $$ \hat g^{-1}\mathbb{D}_A \hat g = \mathbb{D}_{A^g}$$
 the action of the isotropy group $G_A\subset G$ at the point $A$ commutes with the quantum operator
 $\mathbb{D}_A$ and therefore leaves  its spectral subspaces invariant. For a generic $A,$ for which the differences of the numbers $\mu_i$ are not in $\mathbb{Z},$ the isotropy group is just the group
 of diagonal matrices in $G.$ In the general case, the Lie algebra of the isotropy group contains
 also those elements $t_{ij}$ for which 
 $$i-j + \mu_i - \mu_j =0 .$$ 
 As we remarked earlier, there are only a finite number of these pairs of indices with $i\neq j,$ and thus the isotropy group at $A\sim \mu$
 is a finite rank perturbation of the group of diagonal matrices. 
 
The kernel of $\mathbb{D}_A$, which is the same as the kernel of $\mathbb{D}^2_A$,
carries a representation of the tensor product of the Clifford algebra
$Cl(\mathfrak{k}_0)$ and the enveloping algebra of the Cartan subalgebra $U(\mathfrak{h}).$ The vacuum sector carries an irreducible
representation of this algebra. Since the kernel of $\mathbb{D}_A$ consists of a finite number of
operators $t_{(i),(j)}\gamma_{(i),(j)}$ applied to the vacuum representation, the kernel is a finite sum
of irreducible representations of $ U(\mathfrak{h}) \otimes Cl(\mathfrak{k}_0) .$ 
We have thus proven
\begin{theorem} The kernel of the operator $\mathbb{D}_A$ is a finite direct sum of irreducible
representations of the algebra $\mathcal{W}(\frak g_A)= U(\mathfrak{g}_A)\otimes Cl(\mathfrak{g'}_A) $, where $\mathfrak{g'}_A$ is the
intersection of the
Lie algebra $\mathfrak g_A$ of the isotropy group $G_A\subset G$ at the point $A$ with Hilbert--Schmidt operators.
\end{theorem}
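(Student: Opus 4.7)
The plan is to upgrade the penultimate paragraph's finite-decomposition statement — that $\ker \mathbb{D}_A$ is a finite direct sum of $U(\mathfrak{h})\otimes Cl(\mathfrak{k}_0)$-irreducibles — to the stronger claim that it is a finite direct sum of irreducibles of the larger algebra $\mathcal{W}(\mathfrak{g}_A) = U(\mathfrak{g}_A) \otimes Cl(\mathfrak{g}'_A)$. By the equivariance $\hat{g}^{-1}\mathbb{D}_A \hat{g} = \mathbb{D}_{A^g}$ together with the diagonalizability of $D+A$ inside $U(H)$, I may reduce to $A$ diagonal with entries $\mu_i\in\ell_2$. Then $\mathfrak{g}_A$ is spanned by the Cartan $\mathfrak{h}$ together with the resonant $t_{ij}$, $(i,j)\in R_0 := \{(i,j): i\neq j,\ i-j+\mu_i-\mu_j=0\}$, and $\mathfrak{g}'_A = \mathfrak{k}_0 \oplus \text{span}\{e_{ij}+e_{ji}: (i,j)\in R_0\}$. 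As observed in the remarks preceding the theorem, $R_0$ is finite since $\mu_i\to 0$ forces the integer $i-j$ to dominate for large indices.

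The first step is to verify that all of $\mathcal{W}(\mathfrak{g}_A)$, not merely its diagonal part, preserves $\ker \mathbb{D}_A = \ker \mathbb{D}_A^2$. This is immediate from the commutation relations
\[
[\mathbb{D}_A^2, \gamma_{ij}] = 2(k+1)(i-j+\mu_i-\mu_j)\gamma_{ij}, \qquad [\mathbb{D}_A^2, t_{ij}] = 2(k+1)(i-j+\mu_i-\mu_j)t_{ij}
\]
from the previous theorem, since the resonance condition is exactly the vanishing of these coefficients. Combined with the explicit eigenvalue formula on the basis $v_\lambda^{(i),(j)}$, this shows that a basis vector lies in the kernel precisely when the vacuum eigenvalue vanishes and every pair $(i_s, j_s)$ belongs to $R_0$. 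Thus the kernel is generated, starting from the irreducible vacuum $S_{\mathfrak{k}_0}$ of highest $\mathfrak{h}$-weight $\lambda$, by polynomials in the finitely many off-diagonal resonant $t_{ij}$ and by the finite exterior algebra on the off-diagonal resonant $\gamma_{ij}$.

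The main obstacle is to upgrade this cyclicity statement into a genuine finite decomposition into $\mathcal{W}(\mathfrak{g}_A)$-irreducibles. The finite-dimensional subalgebra of $\mathfrak{g}_A$ above $\mathfrak{h}$ is reductive: skew-adjointness of $\mathfrak{g}$ forces the resonant generators to come in conjugate raising/lowering pairs, so this enlargement is the direct sum of an abelian part and a reductive Levi factor, and standard PBW and semisimplicity arguments make the $U(\mathfrak{g}_A)$-module generated from each weight vector of $v_\lambda$ a finite direct sum of irreducible highest-weight modules. In parallel, the finite Clifford subalgebra generated by the resonant off-diagonal $\gamma_{ij}$ is semisimple, and its action on $S_{\mathfrak{k}_0}$ decomposes into finitely many irreducible $Cl(\mathfrak{g}'_A)$-modules. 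Passing to external tensor products over $\CC$ yields the desired finite direct sum of $\mathcal{W}(\mathfrak{g}_A)$-irreducibles, with multiplicities bounded by a quantity controlled by the finite set $R_0$. The delicate point throughout is ensuring reductivity of the Cartan enlargement and compatibility of the two factor-decompositions; both follow from the finite-rank nature of the perturbation identified in the preceding paragraph.
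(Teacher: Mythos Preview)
Your reduction to diagonal $A$, your identification of $\mathfrak{g}_A$ and $\mathfrak{g}'_A$, and your verification via the commutation relations that $\mathcal{W}(\mathfrak{g}_A)$ preserves $\ker\mathbb{D}_A^2$ are all correct. The gap is in your description of the kernel itself. From the eigenvalue formula
\[
\mathbb{D}_A^2\, v_\lambda^{(i),(j)} \;=\; \Bigl((\lambda+(k+1)\mu)^2 + 2(k+1)\sum_s (i_s - j_s + \mu_{i_s}-\mu_{j_s})\Bigr)\, v_\lambda^{(i),(j)},
\]
a basis vector lies in the kernel when the full bracket vanishes, \emph{not} when the vacuum eigenvalue vanishes and every pair $(i_s,j_s)$ is resonant. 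Since $i_s>j_s$ but $\mu_{i_s}-\mu_{j_s}$ can be arbitrarily negative for finitely many pairs (as $\mu\in\ell_2$), individual contributions $i_s-j_s+\mu_{i_s}-\mu_{j_s}$ may be negative and can cancel against the vacuum term or against one another. Hence the kernel is in general \emph{not} generated from $S_{\mathfrak{k}_0}$ by the resonant $t_{ij},\gamma_{ij}$ alone, and the reductivity and PBW analysis you build on that description does not cover the whole kernel.

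The paper's argument avoids this by never attempting such a description. It simply observes that for fixed $\mu$ only finitely many index sequences $(i_s>j_s)$ can make the eigenvalue zero (the contributions are eventually positive and grow without bound), so the kernel is spanned by finitely many monomials $t_{(i),(j)}\gamma_{(i),(j)}$ applied to the irreducible $U(\mathfrak{h})\otimes Cl(\mathfrak{k}_0)$-module $S_{\mathfrak{k}_0}$; it is therefore a finite direct sum of irreducibles for that \emph{smaller} algebra. The passage to $\mathcal{W}(\mathfrak{g}_A)$ is then immediate and requires none of your extra structure: since $U(\mathfrak{h})\otimes Cl(\mathfrak{k}_0)\subset\mathcal{W}(\mathfrak{g}_A)$ and the kernel is unitary (hence semisimple) as a $\mathcal{W}(\mathfrak{g}_A)$-module, each $\mathcal{W}(\mathfrak{g}_A)$-irreducible summand restricts to a sum of the finitely many $U(\mathfrak{h})\otimes Cl(\mathfrak{k}_0)$-irreducibles, so there are at most finitely many of them. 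The ``upgrade'' you identify as the main obstacle is thus automatic once the decomposition over the diagonal subalgebra is in hand.
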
 
\begin{remark}After a gauge transformation $A^g = g^{-1}Ag +g^{-1}[D,g]$ the operator $A$ can be brought
to a diagonal form and the Lie algebra of $\mathfrak{g}_A$ is then the commutative Lie algebra
$\mathfrak{h}$ extended by a finite dimensional algebra of finite matrices.
\end{remark}

\end{document}